\theoremstyle{plain}
 \newtheorem{theorem}{Theorem}
 \newtheorem{lemma}{Lemma}
 \newtheorem{corollary}{Corollary}
\theoremstyle{definition}
\DeclareMathOperator{\Aut}{Aut}
\DeclareMathOperator{\Coker}{Coker}
\DeclareMathOperator{\disc}{disc}
\DeclareMathOperator{\Gal}{Gal}
\DeclareMathOperator{\GL}{GL}
\DeclareMathOperator{\image}{image}
\DeclareMathOperator{\lcm}{lcm}
\DeclareMathOperator{\Pic}{Pic}
\DeclareMathOperator{\SL}{SL}
\newcommand{\IAf}{\mathbb{A}_{\IQ,f}}
\newcommand{\IZ}{\mathbb{Z}}
\newcommand{\IZhat}{\widehat{\IZ}}
\newcommand{\IQ}{\mathbb{Q}}
\newcommand{\IQbar}{\overline{\mathbb{Q}}}
\newcommand{\IR}{\mathbb{R}}
\newcommand{\IC}{\mathbb{C}}
\newcommand{\IF}{\mathbb{F}}
\newcommand{\Gm}{\mathbb{G}_m}
\newcommand{\IA}{\mathbb{A}}
\newcommand{\pr}{\textrm{pr}}
\title{Intersections of Class Fields}
\author{Lars K\"uhne}
\email{lars.kuehne@unibas.ch}
\address{Departement Mathematik und Informatik \\
Spiegelgasse 1 \\
4051 Basel \\
Switzerland}
\subjclass[2010]{11G05, 11R37 (primary), and 11G18, 14G35 (secondary)} 
\keywords{class fields, Heegner points, Andr\'e-Oort conjecture}
\begin{document}

\maketitle

\begin{abstract}
Using class field theory, we prove a restriction on the intersection of the maximal abelian extensions associated with different number fields. This restriction is then used to improve a result of Rosen and Silverman about the linear independence of Heegner points. In addition, it yields effective restrictions for the special points lying on an algebraic subvariety in a product of modular curves. The latter application is related to the Andr\'e-Oort conjecture.
\end{abstract}

\section{Introduction}

The arithmetic of modular curves is essentially determined by their special points. In fact, their canonical models are defined by demanding the ``right'' Galois action on their special points. This Galois action can be expressed completely in terms of class field theory. This is why it is not surprising that problems in the arithmetic geometry of Shimura curves, say the Andr\'e-Oort conjecture for $Y(1)^n$, can be approached by studying the class field theory associated with special points. In particular, intersections of ring class fields appear rather frequently in this situation (e.g., in \cite{Andre1998} and \cite{Rosen2007}).

Our first result is a rather general restriction on the intersection of the maximal abelian extensions of distinct number fields. Throughout this article, we consider all number fields as subfields of a fixed algebraic closure $\IQbar$. In addition, we denote by $K^{\mathrm{ab}} \subset \IQbar$ the maximal abelian extension of a number field $K \subset \IQbar$. We say that a positive integer $n$ annihilates a profinite group $G$ if $g^n=1$ for all $g \in G$. In other words, $n$ is said to annihilate $G$ if the profinite group $G$ has a finite exponent $e(G)$ and $n$ divides $e(G)$.

\begin{theorem} \label{theorem::rayclassfields}
Let $k$ be a number field and let $K_1 / k, \dots, K_r / k$ ($r\geq 2$) be finite normal extensions such that $K_i \cap K_j = k$. Write $K = K_1K_2\cdots K_r$ and $L = K_r \prod_{i=1}^{r-1}K_i^{\mathrm{ab}}$. Then, the Galois group $\Gal(K_r^{\mathrm{ab}} \cap L/K_r k^{\mathrm{ab}})$ is annihilated by 
\begin{equation} \label{equation::n}
e(\Gal(K k^{\mathrm{ab}} /K_rk^{\mathrm{ab}})) \cdot \lcm_{i \neq j}\{[K:K_i][K_j:k]\}.
\end{equation}
\end{theorem}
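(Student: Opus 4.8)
The plan is to transport the statement, by elementary Galois theory, into an assertion about norm maps of id\`ele class groups over the compositum $K$, and to settle the latter by a short computation that uses the hypothesis $K_i\cap K_j=k$ exactly once. Set $G:=\Gal(Kk^{\mathrm{ab}}/K_rk^{\mathrm{ab}})$. First I would record the structural facts: since each $K_i/k$ is normal, each $K_i^{\mathrm{ab}}/k$ is Galois, so $K/k$ and $L/k$ are Galois, $k^{\mathrm{ab}}\subset K_1^{\mathrm{ab}}\subset L$, and $K\subset L$; and since $L=\prod_{i=1}^{r-1}KK_i^{\mathrm{ab}}$ is a compositum of abelian extensions of $K$, the extension $L/K$ is abelian, so $L\subset K^{\mathrm{ab}}$, and likewise $K_r^{\mathrm{ab}}\subset K^{\mathrm{ab}}$. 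Writing $\Sigma=\Gal(L/k)$, $\Sigma_r=\Gal(L/K_r)$, $\Sigma_K=\Gal(L/K)$, $\Sigma'=\overline{[\Sigma,\Sigma]}=\Gal(L/k^{\mathrm{ab}})$ and $\Sigma_r'=\overline{[\Sigma_r,\Sigma_r]}=\Gal(L/K_r^{\mathrm{ab}}\cap L)$, the Galois correspondence identifies $\Gal(K_r^{\mathrm{ab}}\cap L/K_rk^{\mathrm{ab}})$ with $(\Sigma_r\cap\Sigma')/\Sigma_r'$ and $G$ with $(\Sigma_r\cap\Sigma')/(\Sigma_K\cap\Sigma')$.

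Since $(\Sigma_r\cap\Sigma')/(\Sigma_K\cap\Sigma')$ has exponent $e(G)$, every $x\in\Sigma_r\cap\Sigma'$ satisfies $x^{e(G)}\in\Sigma_K\cap\Sigma'$, so it is enough to bound the exponent of the image of $\Sigma_K\cap\Sigma'$ in $(\Sigma_r\cap\Sigma')/\Sigma_r'$. As $\Sigma_r'\subset\Sigma'$, this image equals $(\Sigma_K\cap\Sigma')/(\Sigma_K\cap\Sigma_r')$; and since $K_r^{\mathrm{ab}}\cap L$, $Kk^{\mathrm{ab}}$ and $L$ are Galois over $k$ with $Kk^{\mathrm{ab}}\subset L$, the modular law for normal subgroups of $\Gal(L/k)$ gives $K(K_r^{\mathrm{ab}}\cap L)=KK_r^{\mathrm{ab}}\cap L$, hence $(\Sigma_K\cap\Sigma')/(\Sigma_K\cap\Sigma_r')\cong\Gal(KK_r^{\mathrm{ab}}\cap L/Kk^{\mathrm{ab}})$.

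Now $KK_r^{\mathrm{ab}}\cap L$ and $Kk^{\mathrm{ab}}$ are abelian over $K$, so Artin reciprocity turns the last group into $\ker N_{K/k}\big/\bigl(\ker N_{K/K_r}+\bigcap_{i=1}^{r-1}\ker N_{K/K_i}\bigr)$ inside $C_K=\Gal(K^{\mathrm{ab}}/K)$ (using $Kk^{\mathrm{ab}}\leftrightarrow\ker N_{K/k}$, $KK_r^{\mathrm{ab}}\leftrightarrow\ker N_{K/K_r}$, and $L=\prod_{i<r}KK_i^{\mathrm{ab}}\leftrightarrow\bigcap_{i<r}\ker N_{K/K_i}$). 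I would show this quotient is annihilated already by $[K:K_r]$, which divides $\lcm_{i\neq j}\{[K:K_i][K_j:k]\}$ (take the pair $(r,j)$ with $j\neq r$). For $c\in\ker N_{K/k}$ and $\Delta:=\Gal(K/K_r)$ write
\[
 [K:K_r]\,c=\Bigl([K:K_r]\,c-\sum_{\gamma\in\Delta}\gamma c\Bigr)+\sum_{\gamma\in\Delta}\gamma c .
\]
The first term lies in $\ker N_{K/K_r}$, since $N_{K/K_r}(\gamma c)=N_{K/K_r}(c)$ for $\gamma\in\Delta$. For the second, fix $i<r$; Galois-equivariance of the norm (here $K_i/k$ is normal) together with the fact that the restriction $\Delta\to\Gal(K_i/k)$ is surjective with fibres of size $[K:K_iK_r]$ — this surjectivity is where $K_i\cap K_r=k$ enters — gives
\[
 N_{K/K_i}\Bigl(\sum_{\gamma\in\Delta}\gamma c\Bigr)=\sum_{\gamma\in\Delta}\gamma\bigl(N_{K/K_i}c\bigr)=[K:K_iK_r]\sum_{\sigma\in\Gal(K_i/k)}\sigma\bigl(N_{K/K_i}c\bigr),
\]
and the final sum is the image in $C_{K_i}$ of $N_{K_i/k}(N_{K/K_i}c)=N_{K/k}(c)=0$, hence vanishes. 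Thus the second term lies in $\bigcap_{i<r}\ker N_{K/K_i}$, proving the claim. Assembling the three steps bounds the exponent of $\Gal(K_r^{\mathrm{ab}}\cap L/K_rk^{\mathrm{ab}})$ by $e(G)\cdot[K:K_r]$, hence by \eqref{equation::n}.

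The arithmetic core (the third paragraph) is short; I expect the main obstacle to be the bookkeeping of the first two steps — checking that all fields occurring are Galois over $k$ so that the modular law is available, and keeping track of the precise subquotient of $C_K$ in play. One should also bear in mind the customary caveat that Artin reciprocity identifies $\Gal(K^{\mathrm{ab}}/K)$ with the profinite completion of $C_K$ (equivalently $C_K$ modulo its connected component), although this does not affect any of the computations above.
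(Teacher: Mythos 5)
Your proof is correct, and it takes a genuinely different route from the paper's. The paper first passes, via an elementary cokernel lemma, from $\Gal(KK_r^{\mathrm{ab}}\cap L/Kk^{\mathrm{ab}})$ to a subgroup of the cokernel of the simultaneous restriction $s:\Gal(K^{\mathrm{ab}}/Kk^{\mathrm{ab}})\to\prod_{i}\Gal(K_i^{\mathrm{ab}}/K_ik^{\mathrm{ab}})$, and then kills each factor separately: it lifts $\sigma_1$ to $c_1\in C_{K_1}$, shows $N_{K/K_i}(c_1)$ is $\Gal(K/k)$-invariant (this is where $K_1\cap K_i=k$ enters), hence lies in $C_k$ and dies after raising to the power $[K_i:k]$ and transferring --- which is why the full $\lcm_{i\neq j}\{[K:K_i][K_j:k]\}$ appears. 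You instead identify the relevant group exactly, via the modular law, as the subquotient $\ker N_{K/k}\big/\bigl(\ker N_{K/K_r}+\bigcap_{i<r}\ker N_{K/K_i}\bigr)$ of $\Gal(K^{\mathrm{ab}}/K)$, and kill it by averaging over $\Gal(K/K_r)$; the hypothesis enters only through the surjectivity of the restriction $\Gal(K/K_r)\to\Gal(K_i/k)$. This buys a strictly sharper annihilator, namely $e(\Gal(Kk^{\mathrm{ab}}/K_rk^{\mathrm{ab}}))\cdot[K:K_r]$, which divides \eqref{equation::n} (take the pairs $(r,j)$ in the lcm) and would improve the $2^{r+1}$ of Corollary \ref{corollary::ringclassfields} to $2^{r}$ --- still short of Cohn's $2^{r-1}$ for $r=2$. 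Two points of hygiene: the dictionary $Kk^{\mathrm{ab}}\leftrightarrow\ker N_{K/k}$ should really read $N_{K/k}^{-1}(D_k)$, i.e., you should work with the induced norm maps on $C_K/D_K\cong\Gal(K^{\mathrm{ab}}/K)$ throughout (you do flag this); and in your final display you need that the inclusion $C_k\hookrightarrow C_{K_i}$ carries $D_k$ into $D_{K_i}$ so that the image of $N_{K/k}(c)$ genuinely vanishes in $C_{K_i}/D_{K_i}$ --- this is true ($D_k$ is the connected component of the identity, or use the transfer compatibility \eqref{equation::classfieldtheory2}), but it deserves a sentence.
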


This simple theorem does not seem to have appeared in the literature so far. 
We deduce the following corollary concerning ring class fields. To state it, we briefly indicate our notation but postpone details to Sections \ref{section::classfieldtheory} and \ref{subsection::ringclassfields}. For a CM-field $K \subset \IQbar$ with maximal totally real subfield $F$, there is a group-theoretic transfer map $\Gal(F^{\mathrm{ab}}/F) \rightarrow \Gal(K^{\mathrm{ab}}/K)$. The fixed field of its image, which we call henceforth the ``transfer field'' $K^{\mathrm{tf}} \subset \IQbar$, is closely connected to ring class fields. In fact, let $K^{\mathrm{rcf}} \subset \IQbar$ denote the union of all ring class fields associated with orders in $K$. It is then easy to prove that $\Gal(K^\mathrm{rcf}/K^\mathrm{tf}) \approx \Pic(\mathcal{O}_F)$ (Lemma \ref{lemma::transferfield}). In particular, both fields coincide for $F=\IQ$.

\begin{corollary} \label{corollary::ringclassfields} Given a totally real field $F$, let $K_1, \dots, K_r$ ($r \geq 2$) be distinct CM-fields with maximal totally real subfield $F$. Write $L = K_r \prod_{i=1}^{r-1}K_i^{\mathrm{tf}}$. Then, the Galois group $\Gal(K_r^{\mathrm{tf}} \cap L/K_r)$ is annihilated by $2^{r+1}$.
\end{corollary}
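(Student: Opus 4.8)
The plan is to deduce Corollary \ref{corollary::ringclassfields} from Theorem \ref{theorem::rayclassfields} by taking $k = F$ and applying the theorem to the CM-fields $K_1, \dots, K_r$, then translating the statement about maximal abelian extensions into a statement about transfer fields. The first step is to check the hypotheses of the theorem. Each $K_i/F$ is a degree-$2$ extension, hence normal, and for $i \neq j$ we need $K_i \cap K_j = F$: since $[K_i:F] = [K_j:F] = 2$, the intersection is either $F$ or all of $K_i$; the latter would force $K_i = K_j$, contradicting distinctness. So the theorem applies with all degrees $[K:K_i]$ and $[K_j:F]$ being powers of $2$. Specifically $[K_j:F] = 2$ and $[K:K_i] = [K_1\cdots K_r : K_i]$ divides $2^{r-1}$, so $\lcm_{i\neq j}\{[K:K_i][K_j:F]\}$ divides $2^r$. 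The remaining factor in \eqref{equation::n} is $e(\Gal(Kk^{\mathrm{ab}}/K_rk^{\mathrm{ab}}))$, which I expect to handle by bounding $\Gal(Kk^{\mathrm{ab}}/K_rk^{\mathrm{ab}})$ by a quotient of $\Gal(K/K_r)$, a finite group of order dividing $[K:K_r] \mid 2^{r-1}$; its exponent divides $2^{r-1}$. Multiplying, $n \mid 2^r \cdot 2^{r-1}$, which is more than the claimed $2^{r+1}$, so some care is needed here — more on this below.

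The second step is to connect $K_r^{\mathrm{tf}} \cap L$ with the fields appearing in the theorem. Theorem \ref{theorem::rayclassfields} controls $\Gal(K_r^{\mathrm{ab}} \cap L'/K_r F^{\mathrm{ab}})$ where $L' = K_r\prod_{i=1}^{r-1}K_i^{\mathrm{ab}}$. We want instead $\Gal(K_r^{\mathrm{tf}} \cap L/K_r)$ with $L = K_r\prod_{i=1}^{r-1}K_i^{\mathrm{tf}}$. The relation I would use is the description of the transfer field: by the definition in the excerpt, $K_i^{\mathrm{tf}}$ is the fixed field of the image of the transfer $\Gal(F^{\mathrm{ab}}/F) \to \Gal(K_i^{\mathrm{ab}}/K_i)$, and $K_i^{\mathrm{ab}} \supset K_i F^{\mathrm{ab}} \supset K_i^{\mathrm{tf}}$ (the transfer is compatible with inclusion $F \hookrightarrow K_i$, so its image contains the image of $\Gal(F^{\mathrm{ab}}/F) \to \Gal(K_iF^{\mathrm{ab}}/K_i)$, giving $K_i^{\mathrm{tf}} \subseteq K_iF^{\mathrm{ab}}$); moreover $\Gal(K_i F^{\mathrm{ab}}/K_i^{\mathrm{tf}})$ is a quotient of $\Gal(F^{\mathrm{ab}}/F)^{\mathrm{tors-free part}}$... actually the cleaner route is: $K_r^{\mathrm{tf}} \subseteq K_r^{\mathrm{ab}} \cap L'$ once we know $K_i^{\mathrm{tf}} \subseteq K_i^{\mathrm{ab}}$ and hence $L \subseteq L'$, so $K_r^{\mathrm{tf}} \cap L \subseteq K_r^{\mathrm{ab}} \cap L'$. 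Then $\Gal(K_r^{\mathrm{tf}}\cap L/K_r)$ is a subquotient of $\Gal(K_r^{\mathrm{ab}} \cap L'/K_r)$, which sits in an exact sequence with $\Gal(K_rF^{\mathrm{ab}}/K_r)$ and $\Gal(K_r^{\mathrm{ab}}\cap L'/K_rF^{\mathrm{ab}})$. The first group is a quotient of $\Gal(F^{\mathrm{ab}}/F)$ whose intersection with the transfer setup shows that, after restricting to $K_r^{\mathrm{tf}}$, the part coming from $K_rF^{\mathrm{ab}}$ contributes only $\Gal(K_rF^{\mathrm{ab}} \cap K_r^{\mathrm{tf}}/K_r)$, and by Lemma \ref{lemma::transferfield}-type reasoning this is annihilated by $2$ (the quotient $\Gal(K_rF^{\mathrm{ab}}/K_r^{\mathrm{tf}})$ is built from the cokernel of the transfer, which for the degree-$2$ extension $K_r/F$ is annihilated by $2$ since the composite $\mathrm{transfer} \circ \mathrm{corestriction}$ is multiplication by $2$).

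The third step assembles the bound. Writing the subquotient $\Gal(K_r^{\mathrm{tf}}\cap L/K_r)$ as an extension of $\Gal(K_r^{\mathrm{tf}}\cap L/(K_r^{\mathrm{tf}}\cap L \cap K_rF^{\mathrm{ab}}))$ by $\Gal(K_r^{\mathrm{tf}}\cap L \cap K_rF^{\mathrm{ab}}/K_r)$, the first factor is a subquotient of $\Gal(K_r^{\mathrm{ab}}\cap L'/K_rF^{\mathrm{ab}})$, annihilated by \eqref{equation::n}, and the second factor is annihilated by $2$ as just noted. For the exponent of an extension of a group annihilated by $a$ by a group annihilated by $b$, one gets annihilation by $ab$; so the total is \eqref{equation::n}$\cdot 2$. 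Now the sharpening to $2^{r+1}$: I expect that \eqref{equation::n} itself is better than the crude $2^r\cdot 2^{r-1}$ in this CM setting. The key observation is that $\Gal(Kk^{\mathrm{ab}}/K_rk^{\mathrm{ab}})$ is not merely bounded by $[K:K_r]$ but is an elementary abelian $2$-group: $K/F$ is a multiquadratic extension (compositum of quadratic extensions $K_i/F$), so $\Gal(K/F)$ is elementary abelian of $2$-power order, every subquotient has exponent $\le 2$, hence $e(\Gal(Kk^{\mathrm{ab}}/K_rk^{\mathrm{ab}})) \le 2$. Similarly $\lcm_{i\neq j}\{[K:K_i][K_j:F]\}$: each $[K:K_i]$ divides $2^{r-1}$ and $[K_j:F]=2$, so the lcm divides $2^r$, but I would hope that the structure forces the relevant bound down — if not, one gets $2 \cdot 2^r \cdot 2 = 2^{r+2}$, off by one. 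The main obstacle, therefore, is squeezing out the exact exponent $2^{r+1}$ rather than $2^{r+2}$: this will require either a more careful bookkeeping of which quadratic pieces actually appear (e.g.\ exploiting that $K_r$ already absorbs one quadratic factor so $[K:K_r] \mid 2^{r-1}$ and the $[K_j:F]=2$ factor overlaps with it), or invoking that the transfer field $K_r^{\mathrm{tf}}$ is insensitive to exactly the $2$-torsion ambiguity that the crude bound double-counts, so that the final annihilator is $2^{r+1}$ on the nose.
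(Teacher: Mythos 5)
Your overall strategy matches the paper's: apply Theorem \ref{theorem::rayclassfields} with $k=F$, pass from the $K_i^{\mathrm{ab}}$ to the $K_i^{\mathrm{tf}}$ via $L\subseteq L'$, split $\Gal(K_r^{\mathrm{tf}}\cap L/K_r)$ into the piece above $K_r^{\mathrm{tf}}\cap F^{\mathrm{ab}}\cap L$ (controlled by the theorem) and the piece below (contributing a factor of $2$), and multiply. But there is a genuine gap exactly where you flag one: you bound $e(\Gal(KF^{\mathrm{ab}}/K_rF^{\mathrm{ab}}))$ by $2$, and that is the factor of $2$ you cannot recover. The observation you are missing is that $K=K_1\cdots K_r$ is a compositum of quadratic, hence abelian, extensions of $F$, so $K\subseteq F^{\mathrm{ab}}$ and therefore $KF^{\mathrm{ab}}=F^{\mathrm{ab}}=K_rF^{\mathrm{ab}}$. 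The group $\Gal(KF^{\mathrm{ab}}/K_rF^{\mathrm{ab}})$ is trivial, its exponent is $1$, and the quantity \eqref{equation::n} is just $\lcm_{i\neq j}\{[K:K_i][K_j:F]\}$, which divides $2^{r-1}\cdot 2=2^r$. Combined with the factor $2$ from the bottom layer this gives $2^{r+1}$ on the nose; no further squeezing of the lcm is needed or available.

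Second, your justification for the bottom layer is shaky. The intermediate claim $K_i^{\mathrm{tf}}\subseteq K_iF^{\mathrm{ab}}$ is false in general: for $F=\IQ$ one has $K^{\mathrm{tf}}=K^{\mathrm{rcf}}$, which contains the Hilbert class field of $K$, and that field is usually not abelian over $\IQ$. Likewise, ``transfer $\circ$ corestriction is multiplication by $2$'' does not by itself show that $\Gal(K_r^{\mathrm{tf}}\cap F^{\mathrm{ab}}\cap L/K_r)$ is killed by $2$. The mechanism the paper uses is the generalized dihedral structure: $\Gal(K_r^{\mathrm{tf}}/F)$ is generalized dihedral with respect to $\Gal(K_r^{\mathrm{tf}}/K_r)$ (Lemma \ref{lemma::dihedralgroup}), so any subextension of $K_r^{\mathrm{tf}}/K_r$ that is abelian over $F$ --- such as $K_r^{\mathrm{tf}}\cap F^{\mathrm{ab}}\cap L$ --- has Galois group over $K_r$ annihilated by $2$ (Lemma \ref{lemma::dihedralgroup2}). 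An idelic version of this (writing $c^2=N_{K_r/F}(c)\cdot c\,\iota(c)^{-1}$ with $\iota$ the conjugation over $F$) can be made to work and is close in spirit to what you gesture at, but as written your step is not a proof. With these two repairs your argument becomes the paper's proof.
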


This result is an improvement of \cite[Proposition 18]{Rosen2007}. For the intersection of two ring class fields (i.e., $r=2$), a slightly stronger result (cf.\ \cite[Theorem 8.3.12]{Cohn1985}) is already known. This more precise result of Cohn, with $2^{r-1}$ instead of $2^{r+1}$, does not follow from our more general one. However, this numerical weakness causes probably no harm in most applications.

Having established Corollary \ref{corollary::ringclassfields}, we demonstrate its efficacy by describing its application to two problems of arithmetic geometry concerning modular curves. In these applications, there is neither need for the generality provided by Theorem \ref{theorem::rayclassfields} nor for Corollary \ref{corollary::ringclassfields} in the case $F \neq \IQ$. However, the exposition does not simplify by dropping either generality. In addition, Theorem \ref{theorem::rayclassfields} seems to merit record as it may have some uses quite distant from those developed here.

Our first application of Corollary \ref{corollary::ringclassfields} yields a strengthening of a result on Heegner points proven by Rosen and Silverman \cite{Rosen2007}. We refer to Section \ref{subsection::preliminariesmodularcurves} for a precise definition of the modular curves $Y_0(N)$, $N \geq 1$, and to Section \ref{subsection::cmpoints} for a discussion of CM-points. The Jacobian of the compactification $X_0(N)$ of $Y_0(N)$ is denoted by $J_0(N)$. By the work of Wiles, Taylor, and others (\cite{Breuil2001, Taylor1995, Wiles1995}), each elliptic curve $E/\IQ$ of conductor $N$ admits a modular parameterization by $X_0(N)$ (i.e., a surjective homomorphism $J_0(N) \rightarrow E$ of abelian varieties).

\begin{theorem} \label{theorem::heegner} Let $E/\mathbb{Q}$ be an elliptic curve without complex multiplication and $\pi: J_0(N) \rightarrow E$ a modular parameterization. Then, there exists an effectively computable constant $c_1=c_1(E,\pi,r)>0$ for each positive integer $r$ such that the following assertion is true. Let $P_i$, $1\leq i \leq r$, be a CM-point on $Y_0(N)$ with associated CM-field $K_i$. One of the following three statements is true:
\begin{enumerate}
\item $K_i = K_j$ for some $1 \leq i < j \leq r$, or
\item one of the CM-points $P_i$ is associated with an order of discriminant less than $c_1$, or
\item the points $\pi(P_1), \pi(P_2), \dots, \pi(P_r) \in E(\IQbar)$ generate a free $\IZ$-module of rank $r$.
\end{enumerate}
\end{theorem}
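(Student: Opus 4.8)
The plan is to reduce the linear independence of the points $\pi(P_i)$ to a statement about the fields of definition of the $P_i$, and then to apply Corollary \ref{corollary::ringclassfields} to control how small the intersections of those fields can be. A CM-point $P_i$ on $Y_0(N)$ with CM-field $K_i$ and associated order $\mathcal{O}_i$ of discriminant $D_i$ generates over $K_i$ the ring class field $H_{\mathcal{O}_i}$ of that order (up to the usual harmless adjustments), and one has the classical lower bound $[H_{\mathcal{O}_i}:\IQ] \gg |D_i|^{1/2-\varepsilon}$; more importantly, the Galois orbit of $P_i$ is large and, by the theory of complex multiplication, the action of $\Gal(\IQbar/K_i)$ on $P_i$ factors through $\Gal(H_{\mathcal{O}_i}/K_i)\approx \Pic(\mathcal{O}_i)$, an abelian group of order growing like $|D_i|^{1/2-\varepsilon}$. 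First I would set up the field $F_i = \IQ(P_i)$, or rather the ring class field $H_i$, and record that $H_i$ lies in $K_i^{\mathrm{rcf}}$, hence in $K_i^{\mathrm{tf}}$-extension-wise it contributes an abelian piece of controlled ramification.

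The key step is a descent/specialization argument in the style of Rosen--Silverman. Suppose, contrary to (3), that $\sum_{i} a_i \pi(P_i) = 0$ in $E(\IQbar)$ for some nonzero integer vector $(a_1,\dots,a_r)$, and assume (1) fails, i.e. the $K_i$ are pairwise distinct; we must then force (2), namely that some $|D_i|$ is bounded by an effective constant. Applying an element $\sigma \in \Gal(\IQbar/\IQ)$ to the relation and using that $\pi$ is defined over $\IQ$ gives $\sum_i a_i \pi(P_i^\sigma)=0$. Choosing $\sigma$ to fix all $P_j$ with $j\neq i$ but to move $P_i$ — this is possible precisely when $P_i$ is \emph{not} defined over the compositum of the fields of the other points — and subtracting, one obtains $a_i\bigl(\pi(P_i) - \pi(P_i^\sigma)\bigr)=0$, so $\pi(P_i)-\pi(P_i^\sigma)$ is torsion in $E(\IQbar)$. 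Iterating over a full set of Galois conjugates of $P_i$ over that compositum yields many torsion-related differences, and a height/torsion counting argument (the torsion of $E$ over the relevant field being bounded in terms of $E$, $\pi$, and the degree, hence ultimately in terms of $r$) shows this can only happen if the orbit of $P_i$ over the compositum is small. Thus the real content is: either the orbit of each $P_i$ over the compositum $L_i := \prod_{j\neq i} \IQ(P_j)$ is large — in which case we win — or some $P_i$ has small orbit over $L_i$, which must be ruled out.

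That dichotomy is exactly where Corollary \ref{corollary::ringclassfields} enters, and this is the step I expect to be the main obstacle. The orbit of $P_i$ over $L_i$ has size $[H_i : H_i \cap L_i K_i]$ up to bounded factors; since $H_i/K_i$ is essentially the ring class field tower, $H_i \cap L_i$ is governed by the intersection of $K_i^{\mathrm{rcf}}$ (hence of $K_i^{\mathrm{tf}}$) with the field generated by the other CM-points, which in turn sits inside $K_r \prod_{j\neq i} K_j^{\mathrm{tf}}$ after relabeling. Corollary \ref{corollary::ringclassfields} (applied with $F=\IQ$, so that $K^{\mathrm{tf}}=K$ and transfer fields are just the base CM-fields — but applied to the \emph{ring class field} refinement, using that $\Gal(K_i^{\mathrm{rcf}}/K_i^{\mathrm{tf}})$ is trivial for $F=\IQ$) bounds the exponent of $\Gal(K_i^{\mathrm{tf}} \cap L / K_i)$ by $2^{r+1}$, so the abelian group $\Gal(H_i/H_i\cap L_i K_i)$, being a quotient-by-bounded-exponent of $\Pic(\mathcal{O}_i)$, still has order at least $|\Pic(\mathcal{O}_i)| / 2^{O(r)} \gg_r |D_i|^{1/2-\varepsilon}$. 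Hence the orbit of $P_i$ over $L_i$ is $\gg_r |D_i|^{1/2-\varepsilon}$, and combining with the torsion bound from the previous paragraph forces $|D_i|$ below an effective constant $c_1(E,\pi,r)$, i.e. alternative (2) holds. The delicate points to get right are: the precise comparison between $\IQ(P_i)$ and the ring class field (the index between them is bounded by an absolute constant, coming from $\Gamma_0(N)$-structure), making the constant in the torsion bound genuinely effective and independent of the $P_i$ (this uses uniform bounds on torsion in $E$ over number fields of bounded degree — but here the degree is not bounded, so one instead argues via the canonical height and the fact that $\pi(P_i)-\pi(P_i^\sigma)$ being torsion forces equality once the field is large, a Northcott-type input), and checking that the hypothesis $K_i\cap K_j=k=\IQ$ of the Corollary is automatic for distinct imaginary quadratic fields.
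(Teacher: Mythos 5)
Your overall strategy --- use the linear relation to force (a multiple of) $\pi(P_i)$ to be defined over the compositum of the other ring class fields, apply Corollary \ref{corollary::ringclassfields} to the resulting intersection, and play this off against an effective class-number lower bound --- is exactly the paper's strategy. But two of your steps fail as written, and each is precisely a place where the paper must invoke a specific further ingredient that your sketch omits. First, the torsion step: from $a_i\bigl(\pi(P_i)-\pi(P_i^\sigma)\bigr)=0$ you only learn that the conjugates $\pi(P_i^\sigma)$ over $L_i$ lie in the coset $\pi(P_i)+E[a_i]$, a set of size $a_i^2$; since the coefficients $a_i$ of the relation are unbounded, this gives no bound on the orbit. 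Your proposed repair via canonical heights cannot work either: $\pi(P_i)$ and $\pi(P_i^\sigma)$ are Galois conjugates, hence have equal canonical height, and their difference being torsion does not force them to be equal. What your observation really says is that $a_i\pi(P_i)$ is defined over $L_i$ (the paper states this containment directly, without twisting), and the unbounded multiplier is then absorbed by the bound $[K_i(\pi(P_i)):K_i(a_i\pi(P_i))]\leq c_5(E)$, \emph{uniform in} $a_i$, which comes from Serre's open image theorem for the non-CM curve $E$ (made effective via Lombardo). This input is essential and entirely missing from your argument, as is the preliminary reduction, via Nekov\'a\v{r}'s effective theorem on CM points with torsion image, to a relation with at least two nonzero coefficients.

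Second, you pass from ``the exponent of $\Gal(K_i^{\mathrm{tf}}\cap L/K_i)$ divides $2^{r+1}$'' to ``$\Gal(H_i/H_i\cap L_iK_i)$ has order at least $\#\Pic(\mathcal{O}_i)/2^{O(r)}$,'' which conflates exponent with order. A quotient of $\Pic(\mathcal{O}_i)$ of exponent dividing $2^{r+1}$ can have order as large as $2^{(r+1)m}$, where $m=\dim_{\IF_2}\Pic(\mathcal{O}_i)[2]$ grows with the number of prime divisors of $\disc(\mathcal{O}_i)$; this is not $O_r(1)$. The paper closes this gap with genus theory, which bounds the $2$-rank so that the quotient has order at most $|\disc(\mathcal{O}_i)|^{\varepsilon}$ up to constants --- still beaten by the $|\disc(\mathcal{O}_i)|^{1/2-\varepsilon}$ lower bound, but only after this extra step. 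Relatedly, for the lower bound itself you cannot cite Siegel as stated, since the theorem claims an \emph{effective} constant: the paper uses Siegel--Tatuzawa, which admits one possible exceptional field $K_\ast$, and this is the second reason one needs at least two indices with nonzero coefficient and pairwise distinct $K_i$ (so that some usable index has $K_i\neq K_\ast$). With these three ingredients (open image, genus theory, Siegel--Tatuzawa plus the two-coefficient reduction) supplied, your outline matches the paper's proof.
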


In the original result of Rosen and Silverman, a class of CM-fields has to be excluded. Heuristically, this class should contain infinitely many CM-fields (cf.\ \cite[Section 10]{Rosen2007}). Replacing their central \cite[Proposition 18]{Rosen2007} with our stronger Corollary \ref{corollary::ringclassfields}, we can lift this restriction effectively. To obtain an effective constant $c_1$, we also have to replace Siegel's class number theorem \cite{Siegel1935} with the Siegel-Tatuzawa theorem \cite{Tatuzawa1951} as well as to use Lombardo's effective version \cite{Lombardo2015} of Serre's open image theorem \cite{Serre1972}. The Siegel-Tatuzawa theorem has already been used in a previous work of the author \cite{Kuehne2013} for similar purposes. For comparison with \cite{Rosen2007}, it should be also mentioned that we impose no Heegner condition on the orders associated with the CM-points $P_i$ (cf.\ \cite[Section 1]{Rosen2007}). This means that the cyclic isogeny encoded within $P_i \in Y_0(N)(\IQbar)$ is not required to constitute an endomorphism of the associated CM-elliptic curve. 

Our second application of Corollary \ref{corollary::ringclassfields} is a weak but effective result of Andr\'e-Oort type for a product of modular curves. Recall that the components of a special point in $Y(1)^n$ are CM-points in $Y(1)$. Consequently, we can associate with each special point $P \in Y(1)^n$ a $n$-tuple $(K_1(P),\dots,K_n(P))$ of CM-fields. We refer to Section \ref{subsection::preliminariesmodularcurves} for further details. To spare additional notation, we do not fix any specific degree or height for subvarieties in $Y(1)^n$. Of course, every common choice works as we do only assert effective boundedness.

\begin{theorem} \label{theorem::andreoort} Let $X$ be an irreducible subvariety of $Y(1)^n$ ($n\geq 2$). Assume that there exists no coordinate permutation $\pi: Y(1)^n \rightarrow Y(1)^n$ and no algebraic subvariety $X^\prime \subset Y(1)^{n-1}$ such that $\pi(X)= Y(1) \times X^\prime$ or $\pi(X) = Q \times X^\prime$ with some point $Q \in Y(1)(\IQbar)$. Then, there exists a proper algebraic subset $Z \subsetneq X$ such that any special point $P$ in $X \setminus Z$ satisfies $K_i(P) = K_j(P)$ for some pair $(i,j)$ with $1 \leq i \neq j \leq n$. The degree, the height, and the degree of the field of definition of $Z$ can be effectively bounded in terms of the same quantities associated with $X$.
\end{theorem}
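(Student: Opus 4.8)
The plan is to show that every special point $P=(P_1,\dots,P_n)\in X(\IQbar)$ whose $n$ associated CM-fields $K_1(P),\dots,K_n(P)$ are pairwise distinct must lie in the fixed closed subset $Z = F\cup\bigcup_{i=1}^{n}\bigl(Y(1)_i\times W_i\bigr)$, where $F$ is the finite set of special points on $X$ all of whose orders have discriminant of absolute value at most an effective constant $c=c(X,n)$ produced below, and $W_i\subseteq Y(1)^{n-1}$ is the closed locus of those tuples $(q_j)_{j\ne i}$ over which the fibre of the projection $\pi_{\widehat{i}}\colon X\to Y(1)^{n-1}$ (forgetting the $i$-th coordinate) is positive-dimensional, equivalently over which $X$ contains the line $Y(1)_i\times\{(q_j)_{j\ne i}\}$. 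The hypothesis that no coordinate permutation turns $X$ into $Y(1)\times X'$ is exactly what forces $\dim W_i\le\dim X-2$, so that $Y(1)_i\times W_i\subsetneq X$; together with $\dim X\ge 1$ (which holds since $X$ is not of the form $Q\times X'$) this makes $Z$ a proper closed subset. Standard elimination theory bounds the degree, height and field of definition of each $W_i$ in terms of those of $X$, and $F$ consists of finitely many CM-points of effectively bounded discriminant, so the asserted effective bounds on $Z$ reduce entirely to producing the constant $c$.

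So fix such a $P$, and let $k_0$ be the number field (of effectively bounded degree, part of the given data) over which $X$ is defined. Let $i^\ast$ be an index with $|\disc(\mathcal O_{i^\ast})|$ maximal among the $n$ orders attached to $P$, write $H_i=K_i(j(P_i))$ for the ring class field of $\mathcal O_i$ and $H'=\prod_{j\ne i^\ast}H_j$. Since $j(P_j)\in H_j\subseteq H'$ for every $j\ne i^\ast$ and $X$ is defined over $k_0\subseteq H'k_0$, the group $\Gal(\IQbar/H'k_0)$ fixes each $P_j$ with $j\ne i^\ast$ and stabilises $X$; hence the whole $\Gal(\IQbar/H'k_0)$-orbit of $P$ is contained in the single fibre $\Phi:=X\cap\bigl(Y(1)_{i^\ast}\times\{(P_j)_{j\ne i^\ast}\}\bigr)$, and its cardinality equals $[\IQ(j(P_{i^\ast}))\colon\IQ(j(P_{i^\ast}))\cap H'k_0]$.

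The point is to bound this index below by a positive power of $|\disc(\mathcal O_{i^\ast})|$. First, $\IQ(j(P_{i^\ast}))\cap H'\subseteq H_{i^\ast}\cap\prod_{j\ne i^\ast}H_j\subseteq K_{i^\ast}^{\mathrm{tf}}\cap\bigl(K_{i^\ast}\prod_{j\ne i^\ast}K_j^{\mathrm{tf}}\bigr)$ (note $K_i^{\mathrm{tf}}=K_i^{\mathrm{rcf}}$, since the maximal totally real subfield of each $K_i$ is $\IQ$), so Corollary~\ref{corollary::ringclassfields} with $r=n$ shows that $\Gal\bigl((H_{i^\ast}\cap H')K_{i^\ast}/K_{i^\ast}\bigr)$ has exponent dividing $2^{n+1}$; being simultaneously a quotient of $\Pic(\mathcal O_{i^\ast})=\Gal(H_{i^\ast}/K_{i^\ast})$, it is a finite abelian $2$-group of $2$-rank at most $\mathrm{rk}_2\Pic(\mathcal O_{i^\ast})$, and genus theory for (possibly non-maximal) orders gives $\mathrm{rk}_2\Pic(\mathcal O_{i^\ast})=O\bigl(\omega(\disc(\mathcal O_{i^\ast}))\bigr)=O\bigl(\log|\disc(\mathcal O_{i^\ast})|/\log\log|\disc(\mathcal O_{i^\ast})|\bigr)$. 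Hence its order, and therefore $[\IQ(j(P_{i^\ast}))\cap H'k_0\colon\IQ]$, is at most $[k_0:\IQ]\cdot|\disc(\mathcal O_{i^\ast})|^{o_n(1)}$. Secondly, $[\IQ(j(P_{i^\ast}))\colon\IQ]$ equals $h(\mathcal O_{i^\ast})$ or $2h(\mathcal O_{i^\ast})$, and combining the Siegel--Tatuzawa theorem \cite{Tatuzawa1951} for the fundamental discriminant of $K_{i^\ast}$ with the classical formula relating $h(\mathcal O_{i^\ast})$ to the class number of the maximal order yields $h(\mathcal O_{i^\ast})\gg_\epsilon|\disc(\mathcal O_{i^\ast})|^{1/2-\epsilon}$ effectively, the single exceptional CM-field permitted by Siegel--Tatuzawa contributing only CM-points of bounded discriminant, which are absorbed into $F$. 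Dividing, the fibre $\Phi$ has at least $|\disc(\mathcal O_{i^\ast})|^{1/3}$ elements as soon as $|\disc(\mathcal O_{i^\ast})|$ exceeds an effective threshold depending only on $X$ and $n$.

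It remains to run the dichotomy. If $\Phi$ is infinite then $X$ contains the line $Y(1)_{i^\ast}\times\{(P_j)_{j\ne i^\ast}\}$, so $(P_j)_{j\ne i^\ast}\in W_{i^\ast}$ and $P\in Y(1)_{i^\ast}\times W_{i^\ast}\subseteq Z$. If $\Phi$ is finite, its cardinality is at most an effective function of $\deg X$ (it is a finite fibre of $\pi_{\widehat{i^\ast}}$); comparing with the lower bound of the previous paragraph forces $|\disc(\mathcal O_{i^\ast})|$, hence every $|\disc(\mathcal O_i)|$ by maximality of $i^\ast$, to be at most an effective constant $c=c(X,n)$, so that $P\in F\subseteq Z$. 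In either case $P\in Z$, which proves the theorem. The main obstacle is the estimate of the third paragraph: one must squeeze a genuine \emph{degree} bound for the relevant intersection of ring class fields out of the mere \emph{exponent} bound furnished by Corollary~\ref{corollary::ringclassfields}, and for this the genus-theoretic control of the $2$-rank of class groups of non-maximal orders is indispensable --- the naive estimate $\mathrm{rk}_2\le\log_2 h$ would only yield a bound $|\disc|^{O(n)}$, which is useless here. One must equally take care to funnel the Siegel--Tatuzawa exceptional field, the bounded-conductor configurations, and all the threshold cases into the finite set $F$ while keeping every constant effective.
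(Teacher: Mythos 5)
Your overall strategy is the paper's: split the exceptional locus into the positive-dimensional-fibre part and a bounded-discriminant part, bound the degree of $K_{i}[\mathcal{O}_{i}]\cap L$ over $K_i$ by combining the exponent bound $2^{n+1}$ from Corollary \ref{corollary::ringclassfields} with genus theory for the $2$-rank, and play this off against the Siegel--Tatuzawa lower bound on class numbers. The step you single out as the main obstacle --- converting the exponent bound into a degree bound via the $2$-rank --- is handled correctly and is indeed the crux.

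There is, however, a genuine gap in your treatment of the Siegel--Tatuzawa exceptional field $K_\ast$, and it is forced by two interlocking choices: you take $i^\ast$ to be the index of \emph{maximal} $|\disc(\mathcal{O}_{i})|$, and you define $F$ as the set of special points \emph{all} of whose orders have bounded discriminant. If $K_{i^\ast}=K_\ast$, the lower bound $h(\mathcal{O}_{i^\ast})\gg_\epsilon|\disc(\mathcal{O}_{i^\ast})|^{1/2-\epsilon}$ is simply not available effectively: for an order of conductor $f$ in $K_\ast$ one only gets $h(\mathcal{O})\gg f^{1-\epsilon}\,h(K_\ast)$, and since $|\disc(\mathcal{O})|=f^2|\disc(K_\ast)|$ with $|\disc(K_\ast)|$ not effectively bounded and $h(K_\ast)$ not effectively bounded from below in terms of $|\disc(K_\ast)|$, the discriminant can be enormous while your effective class-number lower bound says nothing. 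So your parenthetical claim that the exceptional field "contributes only CM-points of bounded discriminant" is false, and a point such as $P=(P_1,P_2)$ with $K_1=K_\ast$ of huge discriminant and $\disc(\mathcal{O}_2)=-4$ escapes both $F$ and your fibre dichotomy: the argument at $i^\ast=1$ has no effective lower bound, and the argument at $i=2$ gives no contradiction. The repair is exactly what the paper does: since the $K_i(P)$ are pairwise distinct, at most one of them equals $K_\ast$, so choose $i_0$ with $K_{i_0}\neq K_\ast$ (no maximality), conclude only that \emph{some} coordinate has bounded discriminant, and accordingly replace $F$ by $Z''=X\cap\bigcup_{i,j}x_i^{-1}(Q_j)$ over the finitely many bounded-discriminant CM-points $Q_j$. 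Note that the properness of this $Z''$ uses the hypothesis excluding $\pi(X)=Q\times X'$ in its full strength (each coordinate function must be non-constant on $X$), not merely to guarantee $\dim X\geq 1$ as in your write-up.
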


The theorem generalizes basically the first step of Andr\'e's proof \cite{Andre1998} of the Andr\'e-Oort conjecture for a product of two modular curves. Evidently, a non-effective version of the above theorem is also implied by Pila's Theorem \cite{Pila2011a}. However, his theorem cannot be used to give effective bounds due to several non-effective arguments in his proof. In forthcoming work with Yuri Bilu \cite{Bilu2017}, the proof of Theorem \ref{theorem::andreoort} is refined to prove the Andr\'e-Oort conjecture \textit{effectively} for all linear subvarieties of $Y(1)^n$. This is substantially beyond the currently known cases concerning curves in $Y(1)^2$ \cite{Allombert2015, Bilu2016, Bilu2013, Kuehne2013}. The above Corollary \ref{corollary::ringclassfields} is an essential ingredient for this approach.

\section{Preliminaries}

\subsection{Class Field Theory} \label{section::classfieldtheory}

In order to fix notations, we summarize the basics of class field theory in this section. Our reference is \cite{Neukirch2013}. For each number field $K$, we denote by $I_K$ its idele group, by $C_K=I_K/K^\times$ its idele class group, and by $K^{\mathrm{ab}}$ its maximal abelian extension (in $\IQbar$). For an extension $L/K$ of number fields, there is a canonical inclusion $I_K \hookrightarrow I_L$ and this induces an inclusion $C_K \hookrightarrow C_L$ (\cite[Proposition III.2.6]{Neukirch2013}). In the converse direction, the idele norm $N_{L/K}: I_L \rightarrow I_K$ induces a norm homomorphism $N_{L/K}: C_L \rightarrow C_K$.

Write $G_{\IQ}$ for the (profinite) Galois group $\Gal(\IQbar/\IQ)$. For each number field $K$, every element $\sigma \in G_{\IQ}$ induces (continuous) isomorphisms $I_K \rightarrow I_{\sigma(K)}$ and $C_K \rightarrow C_{\sigma(K)}$. If $L/K$ is normal with Galois group $G=\Gal(L/K)$ then $I_K=I_L^G$ (\cite[Proposition III.2.5]{Neukirch2013}) and $C_K=C_L^G$ (\cite[Theorem III.2.7]{Neukirch2013}). The central object of global class field theory is the topological $G_{\IQ}$-module 
\begin{equation*}
C = \varinjlim_{K}{C_K},
\end{equation*}
where the direct limit is taken over all number fields $K$ with respect to the above-mentioned inclusions. Likewise, the $G_{\IQ}$-action on $C$ is induced from the above-mentioned isomorphisms $C_K \rightarrow C_{\sigma( K)}$. 

Global class field theory uses the fact that the pair $(G_{\IQ},C)$ is a class formation (see \cite[Definition II.1.3]{Neukirch2013}). With this, we have for each finite extension $L/K$ of number fields a reciprocity isomorphism (see \cite[Theorem II.1.9]{Neukirch2013})
\begin{equation*}
\xymatrix{C_K/N_{L|K}C_L \ar[r]^-{\sim} & \Gal(L/K)^{\mathrm{ab}}}
\end{equation*}
and a corresponding norm residue symbol
\begin{equation*}
(\cdot, L|K): \xymatrix{C_K \ar[r] & \Gal(L/K)^{\mathrm{ab}},} 
\xymatrix{ a \ar@{|->}[r] & (a, L|K).}
\end{equation*}
Taking profinite limits, we obtain the universal norm residue symbol
\begin{equation*}
(\cdot, K^{ab}|K): \xymatrix{C_K \ar[r] & \Gal(K^{ab}/K)=\Gal(\IQbar/K)^{\mathrm{ab}}} 
\end{equation*}
for each number field $K$; its kernel is 
\begin{equation*}
D_K=\bigcap_{\substack{K \subseteq L \subset \IQbar \\ [L:K]<\infty}} N_{L|K}C_L \subset C_K.
\end{equation*}
We need some more information on $D_K$ in case $K$ is a CM-field with maximal totally real subfield $F$. Inspecting \cite[Section IX.1]{Artin2009} (or \cite[Section 8.2]{Neukirch2008}), we note that the kernel of $(\cdot,K^{\mathrm{ab}}/K)$ equals
\begin{equation*}
D_K= (\IC^\times)^{[F:\IQ]} \overline{\mathcal{O}_K^\times}K^\times/K^\times \subset C_K
\end{equation*}
where $\overline{\mathcal{O}_K^\times}$ is the topological closure of $\mathcal{O}_K^\times$ in $\widehat{\mathcal{O}}_K^\times$. By Dirchlet's unit theorem, we have furthermore $[\mathcal{O}_K^\times:\mathcal{O}_F^\times]<\infty$. We infer that $\overline{\mathcal{O}_K^\times} K^\times = \overline{\mathcal{O}_F^\times} K^\times \subset I_{K,f}$ and hence
\begin{equation} \label{equation::artinkernel}
D_K \cap {I_{K,f}K^\times/K^\times} = \overline{\mathcal{O}_K^\times} K^\times / K^\times = \overline{\mathcal{O}_F^\times} F^\times / F^\times.
\end{equation}

The reciprocity isomorphism and the norm residue symbol satisfy various functoriality compatibilities that can be found in \cite[Theorem II.1.11]{Neukirch2013}. We state the three most important compatibilities for later use:
\begin{itemize}
\item Let $L/K$ be an extension of number fields. Denote by $\iota_{L/K}: \Gal(L^{ab}/L) \rightarrow \Gal(K^{ab}/K)$ the homomorphism induced from the inclusion $\Gal(\IQbar/L) \subseteq \Gal(\IQbar/K)$. Then, the following diagram is commutative:
\begin{equation} \label{equation::classfieldtheory1}
\xymatrix{ C_L \ar@{->>}[rr]^-{(\cdot, L^{ab}|L)} \ar[d]_-{N_{L|K}} & & \Gal(L^{ab}/L) \ar[d]^{\iota_{L/K}} \\ C_K \ar@{->>}[rr]^-{(\cdot, K^{ab}|K)} & & \Gal(K^{ab}/K).}
\end{equation}
\item Let $L/K$ be an extension of number fields. Denote by $\mathrm{Ver}: \Gal(K^{ab}/K) \rightarrow \Gal(L^{ab}/L)$ the group-theoretic transfer (defined as ``Verlagerung'' in \cite[Definition I.4.10]{Neukirch2013}). Then, the following diagram is commutative:
\begin{equation}
\label{equation::classfieldtheory2}
\xymatrix{ C_K \ar@{->>}[rr]^-{(\cdot, K^{ab}|K)} \ar@{^{(}->}[d]_-{\mathrm{inclusion}} & & \Gal(K^{ab}/K) \ar[d]^{\mathrm{Ver}} \\ C_L \ar@{->>}[rr]^-{(\cdot, L^{ab}|L)} & & \Gal(L^{ab}/L).}
\end{equation}
\item Let $L/K$ be a normal extension of number fields and $\sigma \in \Gal(L/K)$. Then, the following diagram is commutative:
\begin{equation}
\label{equation::classfieldtheory3}
\xymatrix{ C_L \ar@{->>}[rr]^-{(\cdot, L^{ab}|L)} \ar@{->}[d]_-{c \; \mapsto \; \sigma (c)} & & \Gal(L^{ab}/L) \ar[d]^{\tau \; \mapsto \; \sigma \circ \tau \circ \sigma^{-1}} \\ C_L \ar@{->>}[rr]^-{(\cdot, L^{ab}|L)} & & \Gal(L^{ab}/L).}
\end{equation}
\end{itemize}

Finally, we make a simple observation invoking (\ref{equation::classfieldtheory1}): If $L/K$ is an extension of number fields then the preimage of $\Gal(L^{ab}/LK^{ab})$ under $(\cdot, L^{ab}|L)$ is $N_{L/K}^{-1}(D_K)$.
 
\subsection{Modular curves}
\label{subsection::preliminariesmodularcurves} We introduce the modular curve $Y(1)$ as well as the modular curves $Y_0(N)$, $N \geq 1$. The reader may find all necessary details in \cite{Diamond2005, Shimura1971}. However, it is convenient for us to recast the definition of modular curves in Deligne's terminology \cite{Deligne1971, Deligne1979} and we refer to \cite{Milne2003} for details. The advantage of this terminology is that it is interwoven with class field theory. We also keep an embedding $\IQbar \hookrightarrow \IC$ fixed when working with modular curves. Write $\mathbb{A}_{K,f}$ (resp.\ $I_{K,f}$) for the finite adeles (resp.\ ideles) of a number field $K$. We recall that $\mathrm{GL}_2(\IR)$ acts on $\mathcal{H}^{\pm}= \{ z \in \mathbb{C} \ | \ \mathrm{Im}(z) \neq 0\}$ by fractional linear transformations, inducing a $\GL_2(\IQ)$-action on $\mathcal{H}^{\pm}$. 

Let $\mathcal{K}$ be a compact open subgroup of $\GL_2(\IAf)$. An element $g^\prime \in \GL_2(\IQ)$ (resp.\ $k \in \mathcal{K}$) acts on $\mathcal{H}^{\pm} \times \GL_2(\IAf)$ by sending $(x,g)$ to $(g^\prime x,g^\prime g)$ (resp.\ to $(x,gk)$). From these actions, we obtain a double quotient set
\begin{equation*}
S_{\mathcal{K}}(\IC) = \GL_2(\IQ) \backslash \mathcal{H}^{\pm} \times \GL_2(\IAf) / \mathcal{K}.
\end{equation*}
As usual, we write $[x,g]_\mathcal{K}$ for the element of $S_\mathcal{K}(\IC)$ that is represented by $(x,g) \in \mathcal{H}^{\pm} \times \GL_2(\IAf)$. This set can be interpreted as the complex points of its canonical model $S_\mathcal{K}$, which is a smooth algebraic curve defined over $\IQ$. The curve $S_\mathcal{K}$ is not geometrically connected in general. In fact, the determinant $\det: \GL_2 \rightarrow \Gm$ and the sign map $\mathcal{H}^{\pm} \rightarrow \{ \pm \}$, $\tau \mapsto \mathrm{sign}(\mathrm{Im}(\tau))$, induce a map
\begin{equation*}
\GL_2(\IQ) \backslash \mathcal{H}^{\pm} \times \GL_2(\IAf) / \mathcal{K} \longrightarrow \IQ^\times \backslash \{ \pm 1\} \times I_{\IQ,f} / \det(\mathcal{K})
\end{equation*}
and the fibers of this map are the connected components of $S_\mathcal{K}(\IC)$. In other words, the algebraic curve $S_{\mathcal{K}} \times_\IQ \IQbar$ decomposes as a disjoint union of algebraic curves $S_{\mathcal{K},\alpha}$ indexed by $\alpha \in \IQ^\times \backslash \{ \pm 1 \} \times I_{\IQ,f} / \det(\mathcal{K})$. The curves $S_{\mathcal{K},\alpha}$ are not definable over $\IQ$ in general, but it is easy to describe how the Galois action permutes them: Let $\sigma \in \Gal(\IQbar/\IQ)$ and $a \in I_\IQ$ be such that $\sigma|_{\IQ^{\mathrm{ab}}}=(a,\IQ^{\mathrm{ab}}|\IQ)$. Furthermore, let $(\alpha_\infty,\alpha_f) \in \{ \pm 1\} \times I_{\IQ,f}$ be a representative of $\alpha$ and write $a=(a_\infty,a_f) \in \IR^\times \times I_{\IQ,f}$. With these notations, we have $\sigma(S_{\mathcal{K},\alpha})=S_{\mathcal{K},\alpha^\prime}$ with $\alpha^\prime$ being represented by $(\mathrm{sign}(a_\infty) \alpha_\infty, a_f \alpha_f)$ (see \cite[p.\ 349]{Milne2005}). Given $(\alpha_\infty,\alpha_f) \in \{ \pm 1 \} \times I_{\IQ,f}$, we choose an arbitrary $\beta_f \in \GL_2(\IAf)$ such that $\det(\beta_f)=\alpha_f$ and consider the map
\begin{equation*} \label{equation::connectedcomponent}
\mathcal{H}^+ \longrightarrow S_\mathcal{K}(\IC), \ \tau \longmapsto [\alpha_\infty \tau,\beta_f]_\mathcal{K},
\end{equation*}
where $\mathcal{H}^+ = \{ z \in \mathbb{C} \ | \ \mathrm{Im}(z) > 0\}$.
This yields a holomorphic covering of $S_{\mathcal{K},\alpha}(\IC)$, inducing an isomorphism $\Gamma \backslash \mathcal{H}^+ \rightarrow S_{\mathcal{K},\alpha}(\IC)$ with $\Gamma = \SL_2(\IQ) \cap \beta_f \mathcal{K} \beta_f^{-1}$ (cf.\ \cite[Proposition 2.7]{Milne2003}).

We can now specialize the above observations; both $Y(1)$ and $Y_0(N)$ are geometrically connected Shimura curves $S_\mathcal{K}$ (cf.\ \cite[Theorem 7.6.3]{Diamond2005}). To obtain $Y(1)$, we set $\mathcal{K}= \GL_2(\IZhat)$. This implies $\det(\mathcal{K})=\IZhat$ and
\begin{equation*}
\IQ^\times \backslash \{ \pm 1 \} \times I_{\IQ,f} / \det(\mathcal{K}) \approx \IZhat / \IZhat
\end{equation*}
has a single element (cf.\ \cite[Lemma 2.3]{Milne2003}). Thus, $S_\mathcal{K}$ is a geometrically connected $\IQ$-curve and $S_\mathcal{K}(\IC) \approx \SL_2(\IZ) \backslash \mathcal{H}^+$. For $Y_0(N)$, we set
\begin{equation*}
\mathcal{K}=\widehat{\Gamma}_0(N)=\left\{ \begin{pmatrix} a & b \\ c & d \end{pmatrix} \in \GL_2(\IZhat) \mid c \equiv 0  \ (\text{mod $N$})\ \right\}.
\end{equation*}
Again, we have evidently $\det(\mathcal{K})=\IZhat$ so that the above argument also shows that $S_\mathcal{K}$ is a geometrically connected $\IQ$-curve. We also note that $S_\mathcal{K}(\IC) \approx \Gamma_0(N)\backslash \mathcal{H}^+$ by the above, establishing a link to the classical definition of modular curves.

\subsection{CM-points} \label{subsection::cmpoints} Let $K$ be an imaginary quadratic number field. Additionally, fix an isomorphism $\varphi: K \rightarrow \IQ^2$. This induces a $\IQ$-algebra embedding $\rho: K \hookrightarrow \mathrm{Mat}_{2\times 2}(\IQ)$. Note that if $\varphi$ runs through all isomorphisms $K \rightarrow \IQ^2$, the associated embedding $\rho$ runs through the unique $\GL_2(\IQ)$-conjugacy class of embeddings $K \hookrightarrow \mathrm{Mat}_{2 \times 2}(\IQ)$ (cf.\ the Skolem-Noether theorem \cite[Theorem 3.3.14]{Farb1993}). Tensoring $\rho$ with $\IAf
$, we obtain a homomorphism $\IA_{K,f} \hookrightarrow \mathrm{Mat}_{2\times 2}(\IA_{\IQ,f})$, which we also denote by $\rho$ to keep our notations simple. Clearly, we have $\rho(I_{K,f}) \subset \GL_2(\IA_{\IQ,f})$. There also exists a unique $x_\rho \in \mathcal{H}^+ \cap K$ whose stabilizer in $\GL_2(\IQ)$ is $\rho(K^\times)$. The CM-points with CM-field $K$ are the elements of the set
\begin{equation*}
Y_{K}(\IC) = \{ [x_\rho,g]_\mathcal{K} \ | \ g \in \GL_2(\IA_{\IQ,f}) \}\subset S_\mathcal{K}(\IC);
\end{equation*}
this definition does not depend on $\varphi$. By Shimura's construction of $S_\mathcal{K}$, the points in $Y_K(\IC)$ are defined over the maximal abelian extension $K^{\mathrm{ab}}$ and
\begin{equation*}
(t, K^{\mathrm{ab}}/K) [x_\rho,g]_\mathcal{K} = [x_\rho, \rho(t) g]_\mathcal{K}
\end{equation*}
for every $[x_\rho, g]_\mathcal{K} \in Y_K(\IC)$ and every $t \in I_{K,f}$ (cf.\ \cite[Definition 12.8]{Milne2005}).\footnote{Note that all ideles in $\IC^\times \times \{1\} \subset \IC^\times \times I_{f,K}$ are norms with respect to any finite extension $L/K$. This implies $(I_{K,f},K^{\mathrm{ab}}/K)=\Gal(K^{ab}/K)$ so that we can ignore the archimedean component of $I_K$.} With respect to this $I_{K,f}$-action, the CM-point $P=[x_\rho, g]_\mathcal{K}$ has stabilizer
\begin{equation*}
K^\times \cdot \mathcal{K}_{g,\rho} \subseteq I_{K,f}, \ \mathcal{K}_{g,\rho}=\rho^{-1}(g\mathcal{K}g^{-1}).
\end{equation*}
We note that $\mathcal{K}_{g,\rho}$ is a compact open subgroup of $I_{K,f}$. This subgroup depends only on the Galois orbit of $P$ since $\mathcal{K}_{g,\rho}=\mathcal{K}_{\rho(t)g,\rho}$ for any $t\in I_{K,f}$ and $\mathcal{K}_{g,\rho}= \mathcal{K}_{hg,\mathrm{inn}(h) \circ \rho}$ for any $h \in \GL_2(\IQ)$. In particular, we may write $\mathcal{K}_P$ for $\mathcal{K}_{g,\rho}$ in the sequel, thereby associating a compact open subgroup of $I_{K,f}$ with each CM-point $P \in S_{\mathcal{K}}(\IC)$. 
The orbit of $P$ under $\Gal(K^{\mathrm{ab}}/K)$ can be identified with the double quotient 
\begin{equation} \label{equation::galoisorbit}
K^\times \backslash I_{K,f} / \mathcal{K}_{P},
\end{equation}
which is the set of complex points of a $0$-dimensional Shimura subvariety embedded into $S_\mathcal{K}$. In addition, the field of definition of $P$ is the fixed field of $(\mathcal{K}_{P},K^{\mathrm{ab}}/K) \subset \Gal(K^{\mathrm{ab}}/K)$.
 
Finally, we specialize to the modular curves $Y(1)$ and $Y_0(N)$ ($N\geq 1$). In this case, the identity $\GL_2(\IZhat)=\mathrm{Stab}_{\GL_2(\IAf)}(\IZhat^2)$ implies $\mathcal{K}_{g,\rho} = \mathrm{Stab}_{I_{K,f}}(\varphi^{-1}(g \cdot \IZhat^2))$.
It follows that $\mathcal{K}_{g,\rho}$ is the unit group of the ring
\begin{equation*}
\widehat{\mathcal{O}}_{g,\rho} = \{ r \in \IA_{K,f} \ | \  r \cdot \varphi^{-1}(g \cdot \IZhat^2) \subseteq \varphi^{-1}(g \cdot \IZhat^2) \},
\end{equation*}
which is easily seen to be an open subring of $\widehat{\mathcal{O}}_K$ containing $\IZhat$. It is hence the profinite completion of the imaginary quadratic order $\mathcal{O}_{g,\rho} = \widehat{\mathcal{O}}_{g,\rho} \cap \mathcal{O}_K$. In this way, we can associate with each CM-point $P=[x_\rho,g]_\mathcal{K} \in X(1)$ an order $\mathcal{O}_P \subseteq \mathcal{O}_K$, and the quotient (\ref{equation::galoisorbit}) can be identified with $\Pic(\mathcal{O}_P)= K^\times \backslash I_{K,f} / \widehat{\mathcal{O}}_P^\times$. For CM-points on $Y_0(N)$, a variant of the above starting from
\begin{equation*}
\widehat{\Gamma}_0(N)=\mathrm{Stab}_{\GL_2(\IA_{\IQ,f})}\left(\IZhat \cdot \begin{pmatrix} 1 \\ 0 \end{pmatrix} + \IZhat \cdot \begin{pmatrix} 0 \\ N \end{pmatrix}\right)
\end{equation*} 
allows to associate similarly a imaginary quadratic order with a CM-point $P$ on $Y_0(N)$; we again write $\mathcal{O}_P$ for this order. Although this is not relevant for us, let us remark that this order coincides for Heegner points with the one defined in \cite[Section 1]{Rosen2007}.

\subsection{Class number bounds} \label{subsection::classnumbers}

To each quadratic extension $K/\IQ$ there is associated a Dirichlet character $\chi_K(n)=(\disc(\mathcal{O}_K)/n)$ by means of Kronecker's symbol $(d/n)$; for details we refer to \cite[Section 5.2]{Cox1989}. For the associated Dirichlet $L$-function, the Siegel-Tatuzawa Theorem \cite{Tatuzawa1951} (see also \cite{Hoffstein1980/81}) states that there exists (at most) one imaginary quadratic field $K_\ast$ and an effectively computable constant $c_2(\varepsilon)$ such that
\begin{equation*}
L(1,\chi_K)> c_2(\varepsilon)\left|\disc(\mathcal{O}_K)\right|^{-\varepsilon}
\end{equation*}
for any other imaginary quadratic field $K \neq K_\ast$. Using the class number formula \cite[(15) on p.\ 49]{Davenport2000}, we infer that
\begin{equation*}
\# \Pic(\mathcal{O}_K) > c_3(\varepsilon) \left|\disc(\mathcal{O}_K)\right|^{1/2-\varepsilon}.
\end{equation*}
Even more, we can make use of \cite[Theorem 7.24]{Cox1989} to deduce the bound
\begin{equation} \label{equation::classnumberbound}
\# \Pic(\mathcal{O}) > c_4(\varepsilon) \left|\disc(\mathcal{O})\right|^{1/2-\varepsilon}
\end{equation}
for any imaginary quadratic order $\mathcal{O}$ not contained in $K_\ast$. Here, the constant $c_4(\varepsilon)$ is effectively computable.

\section{Intersections of class fields} \label{section::silverman}

\subsection{General class fields}\label{subsection::generalcase}

In this section, we establish Theorem \ref{theorem::rayclassfields}. Its Corollary \ref{corollary::ringclassfields} is deduced in the next section.

\begin{proof}[Proof of Theorem \ref{theorem::rayclassfields}] 
We first use the following  elementary argument repeatedly: If
\begin{equation*}
\xymatrix{0 \ar[r] & G_1 \ar[r] & G_0 \ar[r] & G_2 \ar[r] & 0}
\end{equation*}
is an exact sequence of (neither necessarily commutative nor finite) groups with $n_1$ (resp.\ $n_2$) annihilating $G_1$ (resp.\ $G_2$) then $G_0$ is annihilated by $n_1n_2$. Conversely, if $G_0$ is annihilated by $n_0$ then so are $G_1$ and $G_2$. We want to show that $\Gal(K_r^{\mathrm{ab}} \cap L/K_r k^{\mathrm{ab}})$ is annihilated by (\ref{equation::n}). As $\Gal(K K_r^{\mathrm{ab}} \cap L/K_r k^{\mathrm{ab}})$ surjects onto $\Gal(K_r^{\mathrm{ab}} \cap L/K_r k^{\mathrm{ab}})$, it suffices to show that $\Gal(K K_r^{\mathrm{ab}} \cap L/K_r k^{\mathrm{ab}})$ is annihilated by (\ref{equation::n}).\footnote{Although normality is not transitive in general, the normality of $K_i/k$ implies directly that $K_i^{\mathrm{ab}}/k$ is likewise normal. It follows that $KK_r^\mathrm{ab} \cap L / K_r k^{\mathrm{ab}}$ is indeed a normal extension.} Furthermore, by using the exact sequence
\begin{equation*}
\xymatrix{0 \ar[r] & \Gal(K K_r^{\mathrm{ab}} \cap L/K k^{\mathrm{ab}}) \ar[r] & \Gal(K K_r^{\mathrm{ab}} \cap L/K_r k^{\mathrm{ab}}) \ar[r] & \Gal(K k^{\mathrm{ab}}/K_r k^{\mathrm{ab}}) \ar[r] & 0}
\end{equation*}
we reduce to show that $\Gal(K K_r^{\mathrm{ab}} \cap L/K k^{\mathrm{ab}})$ is annihilated by $\lcm_{i \neq j}\{[K:K_i][K_j:k]\}$. In the remainder of this proof, all Galois groups encountered are abelian. 
By Lemma \ref{lemma::galois}, the (abelian) group $\Gal(K K_r^{\mathrm{ab}} \cap L / Kk^{\mathrm{ab}})$ is isomorphic to $\Coker(u)$ in the commutative diagram of abelian groups
\begin{equation*}
\xymatrix{ \Gal(K_1^{\mathrm{ab}}\cdots K_r^{\mathrm{ab}}/Kk^{\mathrm{ab}}) \ar[r]^<<<<<<<<{u} \ar[rd]_{w} & \Gal(K K_r^{\mathrm{ab}}/Kk^{\mathrm{ab}}) \ar[d]^{v} \times \Gal(L/Kk^{\mathrm{ab}}), \\ & \Gal(K_r^{\mathrm{ab}}/K_rk^{\mathrm{ab}}) \times \prod_{i=1}^{r-1} \Gal(K_i^{\mathrm{ab}}/K_ik^{\mathrm{ab}}),}
\end{equation*}
where the homomorphisms are the standard ones. As $v$ is injective, we can identify $\Coker(u)$ with a subgroup of $\Coker(w)$. In this way, $\Coker(w)$ contains an isomorphic copy of $\Gal(K K_r^{\mathrm{ab}} \cap L / Kk^{\mathrm{ab}})$. Since $K_1^{\mathrm{ab}}\cdots K_r^{\mathrm{ab}} \subset K^{\mathrm{ab}}$, $\Coker(w)$ is the same as the cokernel $\Coker(s)$ of
\begin{equation*}
s: \Gal(K^{\mathrm{ab}}/ Kk^{\mathrm{ab}}) \rightarrow \prod_{i=1}^{r} \Gal(K_i^{\mathrm{ab}}/K_ik^{\mathrm{ab}}), \ \sigma \mapsto (\sigma|_{K_1^{\mathrm{ab}}},\dots,\sigma|_{K_r^{\mathrm{ab}}}).
\end{equation*}
We claim that $\Coker(s)$ is annihilated by $\lcm_{i \neq j}\{[K:K_i][K_j:k]\}$.
To prove this, it suffices to show that for each $\sigma_1 \in \Gal(K_1^{\mathrm{ab}}/K_1k^{\mathrm{ab}})$ the element
\begin{equation*}
[K:K_1] \lcm_{j \neq 1}\{[K_j:k]\} \cdot \left(\sigma_1, \mathrm{id}_{K_2^{\mathrm{ab}}}, \dots,\mathrm{id}_{K_r^{\mathrm{ab}}}\right) \in \prod_{i=1}^{r} \Gal(K_i^{\mathrm{ab}}/K_ik^{\mathrm{ab}})
\end{equation*}
is already in the image of $s$. By (\ref{equation::classfieldtheory1}) from Section \ref{section::classfieldtheory}, we have a commutative diagram 
\begin{equation*}
\xymatrix{
N_{K/k}^{-1}(D_k) \ar[r]^<<<<<<<<<{t} \ar@{->>}[d]_{(\cdot,K^{\mathrm{ab}}/ K)} & \prod_{i=1}^{r} N_{K_i/k}^{-1}(D_k)
 \ar@{->>}[d]_{\prod_{i=1}^{r} (\cdot,K_i^{\mathrm{ab}}/ K_i)} 
\\
\Gal(K^{\mathrm{ab}}/ Kk^{\mathrm{ab}}) \ar[r]^<<<<{s} &
\prod_{i=1}^{r} \Gal(K_i^{\mathrm{ab}}/K_ik^{\mathrm{ab}}),
}
\end{equation*}
where the homomorphism $t$ sends $c \in N_{K/k}^{-1}(D_k)$ to $(N_{K/K_1}c,\dots, N_{K/K_r}c ) \in \prod_{i=1}^{r} N_{K_i/k}^{-1}(D_k)$. Choose an idele $c_1 \in N_{K_1/k}^{-1}(D_k) \subset C_{K_1}$ such that $(c_1,K_1^{\mathrm{ab}} | K_1)=\sigma_1$. We may consider $c_1$ also as an element of $C_K$ by using the canonical inclusion $C_{K_1} \hookrightarrow C_K$.
By normality, the Galois group $\Gal(K/k)$ acts on each $C_{K_i}$, $1 \leq i \leq r$. The idele classes $N_{K/K_i}(c_1)$, $2 \leq i \leq r$, are invariant under this action;  in fact, as $\Gal(K/K_i)$ is normal, each $\sigma \in \Gal(K/k)$ satisfies $\sigma \Gal(K/K_i)\sigma^{-1} = \Gal(K/K_i)$. Hence, for each $\sigma \in \Gal(K/K_1)$ we have
\begin{equation*}
\sigma(N_{K/K_i}(c_1)) = \prod_{\tau \in \Gal(K/K_i)}{\sigma \circ \tau(c_1)} = \sum_{\tau^\prime \in \Gal(K/K_i)}{\tau^\prime \circ \sigma(c_1)} = \sum_{\tau\in \Gal(K/K_i)}{\tau(c_1) } = N_{K/K_i}(c_1)\text{.}
\end{equation*}
Being a norm, $N_{K/K_i}(c_1)$ is also clearly $\Gal(K/K_i)$-invariant. From $K_1 \cap K_i = k$, we conclude the asserted $\Gal(K/k)$-invariance by \cite[Theorem VI.1.14]{Lang2002}. By Hilbert's Satz 90, it follows that $N_{K/K_i}(c_1) \in C_{K_i}^{\Gal(K_i/k)}=C_k$ (\cite[Theorem III.2.7]{Neukirch2013}) and consequently
\begin{equation*}
N_{K/K_i}(c_1)^{[K_i:k]} = N_{K_i/k}(N_{K/K_i}(c_1)) = N_{K_1/k}(c_1)^{[K:K_1]} \in D_k.
\end{equation*}
In other words, the automorphism $(N_{K/K_i}(c_1)^{[K_i:k]},k^{\mathrm{ab}} | k) \in \Gal(k^{\mathrm{ab}}/k)$ is the identity $\mathrm{id}_{k^{\mathrm{ab}}}$. Using (\ref{equation::classfieldtheory2}) in Section \ref{section::classfieldtheory}, we conclude that also
\begin{equation*}
(N_{K/K_i}(c_1)^{[K_i:k]},K_i^{\mathrm{ab}} | K_i)=\mathrm{Ver}(N_{K/K_i}(c_1)^{[K_i:k]},k^{\mathrm{ab}} | k) = \mathrm{Ver}(\mathrm{id}_{k^{\mathrm{ab}}}) = \mathrm{id}_{K_i^{\mathrm{ab}}}.
\end{equation*}
We infer that $\prod_{i=1}^{r} (\cdot,K_i^{\mathrm{ab}} | K_i)$ sends
\begin{equation*}
t(c_1)^{\lcm_{j \neq 1}\{[K_j:k]\}}= \left(c^{[K:K_1]}_1, N_{K/K_2}(c_1), \dots, N_{K/K_n}(c_1) \right)^{\lcm_{j \neq 1}\{[K_j:k]\}} \in \mathrm{image}(t)
\end{equation*}
into 
\begin{equation*}
[K:K_1] \lcm_{j \neq 1}\{[K_j:k]\} \cdot \left(\sigma_1, \mathrm{id}_{K_2^{ab}}, \dots,\mathrm{id}_{K_r^{ab}}\right) \in \mathrm{image}(s).
\end{equation*}
as claimed.
\end{proof}

The next lemma is completely elementary. Lacking references, we prove it here as a specialization of the results in \cite[Section VI.1]{Lang2002}. Note that the restriction to abelian extensions is necessary because the image of $\Psi$ is not normal in general.

\begin{lemma} \label{lemma::galois} Let $k$ be a field and $K_1/k$, $K_2/k$ two abelian Galois extensions contained in some common larger field. Then, the cokernel of the natural map
\begin{equation*}
\xymatrix{\Psi: \Gal(K_1K_2/k) \ar[r] & \Gal(K_1/k) \times \Gal(K_2/k),} \xymatrix{ \sigma \ar@{|->}[r] & (\sigma|_{K_1}, \sigma|_{K_2})}
\end{equation*}
is isomorphic to $\Gal(K_1\cap K_2/k)$.
\end{lemma}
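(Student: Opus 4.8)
The plan is to identify $\Gal(K_1K_2/k)$ with the fiber product $\Gal(K_1/k)\times_{\Gal(K_1\cap K_2/k)}\Gal(K_2/k)$ via the map $\Psi$, and then compute the cokernel of the inclusion of this fiber product into the full product $\Gal(K_1/k)\times\Gal(K_2/k)$. This is pure Galois theory, so I work entirely inside a field containing $K_1K_2$.

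\medskip

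First I would recall the basic facts from \cite[Section VI.1]{Lang2002}: since $K_1/k$ and $K_2/k$ are Galois, so is $K_1K_2/k$, and the restriction map $\Psi$ is an injective homomorphism. Its image consists exactly of those pairs $(\sigma_1,\sigma_2)$ that agree on $K_1\cap K_2$, i.e. $\Psi$ identifies $\Gal(K_1K_2/k)$ with the fiber product $G_1\times_{\bar G}G_2$, where $G_i=\Gal(K_i/k)$ and $\bar G=\Gal(K_1\cap K_2/k)$, the maps $G_i\to\bar G$ being restriction. (Surjectivity onto the fiber product is the one point needing the Galois hypothesis: given $\sigma_1,\sigma_2$ agreeing on $K_1\cap K_2$, one uses that $K_1K_2$ is Galois over $k$ together with the isomorphism $\Gal(K_1K_2/K_1)\cong\Gal(K_2/K_1\cap K_2)$ to lift.)

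\medskip

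Second, with this identification the cokernel of $\Psi$ is the cokernel of the inclusion $G_1\times_{\bar G}G_2\hookrightarrow G_1\times G_2$. I would write down the surjective homomorphism
\begin{equation*}
\delta: G_1\times G_2 \longrightarrow \bar G,\qquad (\sigma_1,\sigma_2)\longmapsto (\sigma_1|_{K_1\cap K_2})\cdot(\sigma_2|_{K_1\cap K_2})^{-1},
\end{equation*}
which is well-defined and surjective because each restriction $G_i\to\bar G$ is surjective (the extensions being Galois). Its kernel is precisely the set of pairs agreeing on $K_1\cap K_2$, namely $G_1\times_{\bar G}G_2=\image(\Psi)$. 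Hence $\delta$ induces an isomorphism $\Coker(\Psi)=(G_1\times G_2)/\image(\Psi)\xrightarrow{\ \sim\ }\bar G=\Gal(K_1\cap K_2/k)$, which is the claim.

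\medskip

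I do not expect any serious obstacle here; the only point requiring a little care is the surjectivity of $\Psi$ onto the fiber product (equivalently the well-definedness/surjectivity of $\delta$), which rests on the Galois hypothesis on $K_1/k$ and $K_2/k$ and on the standard fact $\Gal(K_1K_2/K_1)\cong\Gal(K_2/K_1\cap K_2)$. Everything else is a diagram-free computation with restriction homomorphisms. Note that commutativity of the groups plays no role in this argument; as the remark preceding the lemma indicates, the abelian hypothesis is imposed only so that $\image(\Psi)$ is normal, which is what makes the cokernel a group in the application.
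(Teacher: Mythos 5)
Your proof is correct and follows essentially the same route as the paper's: both identify $\Coker(\Psi)$ with $\Gal(K_1\cap K_2/k)$ via the surjective difference map $(\sigma_1,\sigma_2)\mapsto \sigma_1|_{K_1\cap K_2}\sigma_2|_{K_1\cap K_2}^{-1}$ and reduce to showing that $\image(\Psi)$ equals its kernel. The only variation is how that equality is established: you use the direct lifting argument through $\Gal(K_1K_2/K_1)\cong\Gal(K_2/K_1\cap K_2)$, which handles infinite extensions in one stroke, whereas the paper counts orders in the finite case and then exhausts by finite subextensions. One small caveat on your closing remark: commutativity is not entirely irrelevant to the argument itself, since your map $\delta$ is a group homomorphism only because $\Gal(K_1\cap K_2/k)$ is abelian (which the hypothesis guarantees), not merely because normality of $\image(\Psi)$ is needed downstream.
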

\begin{proof} We establish that the image of $\Psi$ equals the kernel of the homomorphism
\begin{equation*}
\xymatrix{\Xi: \Gal(K_1/k) \times \Gal(K_2/k) \ar[r] & \Gal(K_1 \cap K_2 /k),} 
\xymatrix{ (\sigma_1,\sigma_2) \ar@{|->}[r] & \sigma_1|_{K_1\cap K_2}\sigma_2|_{K_1 \cap K_2}^{-1}.}
\end{equation*}
Evidently, $\image(\Psi)$ is contained in $\ker(\Xi)$. Assume first that both $K_1$ and $K_2$ are finite over $k$. As $\Xi$ is surjective, we have
\begin{equation*}
\# \ker(\Xi) = \frac{\# \Gal(K_1/k) \cdot \# \Gal(K_2/k)}{\# \Gal(K_1 \cap K_2/k)} = \# \Gal(K_1 K_2/k) = \# \image (\Psi),
\end{equation*}
where we used \cite[Theorem VI.1.12]{Lang2002} for the second equality. This numerical equality shows $\image(\Psi)=\ker(\Xi)$ as claimed. The general case follows by exhausting $K_1/k$ and $K_2/k$ with finite subextensions.
\end{proof}

\subsection{Ring class fields and transfer fields} \label{subsection::ringclassfields}
Let $K$ be a CM-field. This means that $K$ is an imaginary quadratic extension of a totally real number field $F$. To any $\mathcal{O}_F$-order $\mathcal{O} \subseteq \mathcal{O}_K$ is assigned its profinite completion $\widehat{\mathcal{O}}$, and its units $\widehat{\mathcal{O}}^\times$ embed diagonally into $I_{K,f} \subset I_K$. The ring class field $K[\mathcal{O}] \subset K^{\mathrm{rcf}}$ associated with $\mathcal{O}$ is the fixed field of $(\widehat{\mathcal{O}}^\times K^\times/K^\times,K^{\mathrm{ab}}/K) \subset \Gal(K^{\mathrm{ab}}/K)$. 
Using (\ref{equation::artinkernel}), we deduce
\begin{equation} \label{equation::picard2}
\Gal(K[\mathcal{O}]/K) = \frac{(I_{K,f}K^\times/K^\times,K^\mathrm{ab}/K)}{(\widehat{\mathcal{O}}^\times K^\times/K^\times,K^\mathrm{ab}/K)} = K^\times \backslash I_{K,f}/  \widehat{\mathcal{O}}^\times = \Pic(\mathcal{O}).
\end{equation}
Let $K^{\mathrm{rcf}} \subset K^{\mathrm{ab}}$ be the union of all ring class fields. Additionally, we can consider $C_F=I_F/F^\times=I_F K^\times/K^\times$ as a closed subgroup of $C_K$ and define the ``transfer field'' $K^{\mathrm{tf}} \subset K^{\mathrm{ab}}$ to be the fixed field of $(I_F K^\times/K^\times, K^{\mathrm{ab}}/K) \subset \Gal(K^{\mathrm{ab}}/K)$. The fields $K^{\mathrm{ab}}$ and $K^{\mathrm{rcf}}$ are intimately connected as the following lemma shows.

\begin{lemma} \label{lemma::transferfield} Let $F$ be a totally real field. For any totally imaginary quadratic extension $K/F$, $K^\mathrm{rcf}$ is a finite extension of $K^\mathrm{tf}$ and the Galois group $\Gal(K^\mathrm{rcf}/K^\mathrm{tf})$ is isomorphic to $\Pic(\mathcal{O}_F)$. 
\end{lemma}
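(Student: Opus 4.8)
\emph{Proof sketch.} The plan is to realise both $K^{\mathrm{rcf}}$ and $K^{\mathrm{tf}}$ as fixed fields of explicit subgroups of $\Gal(K^{\mathrm{ab}}/K)$ and then to compute the quotient directly from the description of $D_K=\ker(\cdot,K^{\mathrm{ab}}/K)$ recorded in (\ref{equation::artinkernel}).

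First I would prove that $K^{\mathrm{rcf}}$ is the fixed field of $(\widehat{\mathcal{O}_F}^\times K^\times/K^\times,K^{\mathrm{ab}}/K)$. Since $K^{\mathrm{rcf}}$ is the compositum of the $K[\mathcal{O}]$, we have $\Gal(K^{\mathrm{ab}}/K^{\mathrm{rcf}})=\bigcap_{\mathcal{O}}(\widehat{\mathcal{O}}^\times K^\times/K^\times,K^{\mathrm{ab}}/K)$, and the inclusion $\supseteq(\widehat{\mathcal{O}_F}^\times K^\times/K^\times,K^{\mathrm{ab}}/K)$ is clear because $\widehat{\mathcal{O}_F}^\times\subseteq\widehat{\mathcal{O}}^\times$ for every $\mathcal{O}_F$-order $\mathcal{O}\subseteq\mathcal{O}_K$. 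For the reverse inclusion I would check first, by the local identity $\bigcap_n(\mathcal{O}_{F,v}+\mathfrak{p}_v^n\mathcal{O}_{K,v})^\times=\mathcal{O}_{F,v}^\times$ (Krull's intersection theorem) applied to the orders $\mathcal{O}=\mathcal{O}_F+\mathfrak{f}\mathcal{O}_K$, that $\bigcap_{\mathcal{O}}\widehat{\mathcal{O}}^\times=\widehat{\mathcal{O}_F}^\times$ inside $I_{K,f}$; then, since the groups $\widehat{\mathcal{O}}^\times$ are compact and form a downward directed family ($\widehat{\mathcal{O}_1\cap\mathcal{O}_2}^\times=\widehat{\mathcal{O}_1}^\times\cap\widehat{\mathcal{O}_2}^\times$), a finite-intersection-property argument inside the compact group $\widehat{\mathcal{O}_K}^\times$ shows that the intersection of the images equals the image of the intersection, i.e. $(\widehat{\mathcal{O}_F}^\times K^\times/K^\times,K^{\mathrm{ab}}/K)$. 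Combined with the definition of $K^{\mathrm{tf}}$ as the fixed field of $(I_FK^\times/K^\times,K^{\mathrm{ab}}/K)$ and the trivial inclusion $\widehat{\mathcal{O}_F}^\times\subseteq I_F$, this yields $K^{\mathrm{tf}}\subseteq K^{\mathrm{rcf}}$ together with
\[
\Gal(K^{\mathrm{rcf}}/K^{\mathrm{tf}})=\bigl(I_FK^\times/K^\times,K^{\mathrm{ab}}/K\bigr)\big/\bigl(\widehat{\mathcal{O}_F}^\times K^\times/K^\times,K^{\mathrm{ab}}/K\bigr).
\]

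Next I would pass through the reciprocity map. Writing $C_F=I_FK^\times/K^\times\subseteq C_K$, restriction of $(\cdot,K^{\mathrm{ab}}/K)$ to $C_F$ is onto $(I_FK^\times/K^\times,K^{\mathrm{ab}}/K)$ with kernel $C_F\cap D_K$, so the quotient above is isomorphic to $C_F/\bigl(\widehat{\mathcal{O}_F}^\times(C_F\cap D_K)\bigr)$ (with $\widehat{\mathcal{O}_F}^\times$ meaning its image in $C_F$). The heart of the matter is the identity $C_F\cap D_K=(\IR^\times)^{[F:\IQ]}\overline{\mathcal{O}_F^\times}F^\times/F^\times$. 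Here $\supseteq$ is immediate from the formula $D_K=(\IC^\times)^{[F:\IQ]}\overline{\mathcal{O}_K^\times}K^\times/K^\times$ together with the inclusions $\overline{\mathcal{O}_F^\times}\subseteq\overline{\mathcal{O}_K^\times}$ and $I_{F,\infty}\subseteq I_{K,\infty}$; for $\subseteq$ one writes a class of $C_F\cap D_K$ as (an archimedean idele)$\cdot v\cdot k$ with $v\in\overline{\mathcal{O}_K^\times}$, $k\in K^\times$, uses $[\mathcal{O}_K^\times:\mathcal{O}_F^\times]<\infty$ to replace $v$ by an element of $\overline{\mathcal{O}_F^\times}$ after multiplying $k$ by a unit of $\mathcal{O}_K$, and then uses $K^\times\cap I_{F,f}=F^\times$ inside $I_{K,f}$ (which holds because $K/F$ admits an inert prime) to conclude that the modified $k$ lies in $F^\times$.

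Granting this, $\widehat{\mathcal{O}_F}^\times(C_F\cap D_K)=(\IR^\times)^{[F:\IQ]}\widehat{\mathcal{O}_F}^\times F^\times/F^\times$, and dividing $C_F$ by this subgroup annihilates the whole archimedean factor, leaving $I_{F,f}/(\widehat{\mathcal{O}_F}^\times F^\times)=\Pic(\mathcal{O}_F)$; this is finite, so $[K^{\mathrm{rcf}}:K^{\mathrm{tf}}]<\infty$, and the lemma follows. I expect the main obstacle to be the archimedean bookkeeping in the identity $C_F\cap D_K=(\IR^\times)^{[F:\IQ]}\overline{\mathcal{O}_F^\times}F^\times/F^\times$: the point is that this intersection is strictly larger than $D_F$, being enlarged by the group of all sign changes at the real places of $F$ (these die in $\Gal(K^{\mathrm{ab}}/K)$ because the infinite places of $K$ are complex), and it is exactly this enlargement that replaces the narrow class group of $F$ by the ordinary one.
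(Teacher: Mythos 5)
Your proposal is correct and follows essentially the same route as the paper: both identify $K^{\mathrm{rcf}}$ as the fixed field of $(\widehat{\mathcal{O}}_F^\times K^\times/K^\times, K^{\mathrm{ab}}/K)$ via the orders $\mathcal{O}_F+\mathfrak{f}\mathcal{O}_K$ and the identity $\bigcap\widehat{\mathcal{O}}^\times=\widehat{\mathcal{O}}_F^\times$, and then compute $\Gal(K^{\mathrm{rcf}}/K^{\mathrm{tf}})$ as $F^\times\backslash I_{F,f}/\widehat{\mathcal{O}}_F^\times=\Pic(\mathcal{O}_F)$ using the description of $\ker(\cdot,K^{\mathrm{ab}}/K)$ in (\ref{equation::artinkernel}). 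The only cosmetic differences are that you commute the intersection over orders with the Artin map by compactness where the paper uses the fact that each $\widehat{\mathcal{O}}_i^\times K^\times/K^\times$ contains the kernel, and that you carry the archimedean components explicitly where the paper restricts to finite ideles from the outset.
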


If $F$ has class number one, then $K^{\mathrm{rcf}}=K^{\mathrm{tf}}$ for all totally imaginary extensions $K/F$. This is hence the case for the proofs of Theorems \ref{theorem::heegner} and \ref{theorem::andreoort} below.

\begin{proof} Each ideal $\mathfrak{f} \subseteq \mathcal{O}_F$ yields an $\mathcal{O}_F$-order $\mathcal{O}_F + \mathfrak{f} \mathcal{O}_K \subseteq \mathcal{O}_K$, and the intersection of all their profinite completions $\widehat{\mathcal{O}}_F + \mathfrak{f} \widehat{\mathcal{O}}_K \subseteq \widehat{\mathcal{O}}_K$ equals $\widehat{\mathcal{O}}_F$. In addition, the intersection $\mathcal{O}_1 \cap \mathcal{O}_2$ of any two $\mathcal{O}_F$-orders $\mathcal{O}_1, \mathcal{O}_2 \subseteq \mathcal{O}_K$ is again an $\mathcal{O}_F$-order. It follows that there is a descending chain of $\mathcal{O}_F$-orders $\mathcal{O}_i \subseteq \mathcal{O}_K$ such that $\widehat{\mathcal{O}} ^\times_F = \bigcap_i \widehat{\mathcal{O}}_i^\times$ and $K^{\mathrm{rcf}}= \bigcup_i K[\mathcal{O}_i]$.

We next claim the equality
\begin{equation} \label{equation::group}
(\widehat{\mathcal{O}} ^\times_F K^\times / K^\times, K^{\mathrm{ab}}/K) = 
\bigcap_i (\widehat{\mathcal{O}}_i^\times K^\times/K^\times, K^{\mathrm{ab}}/K).
\end{equation}
The inclusion ``$\subseteq$'' is trivial. For the other inclusion, we remark that by (\ref{equation::artinkernel}) the subgroup $\widehat{\mathcal{O}}_F^\times K^\times / K^\times \subset I_{K,f}K^\times/K^\times$ contains the kernel of $(\cdot, K^{\mathrm{ab}}/K)|_{I_{K,f}K^\times/K^\times}$. Hence, the identity (\ref{equation::group}) follows immediately from $\widehat{\mathcal{O}} ^\times_F = \bigcap_i \widehat{\mathcal{O}}_i^\times$.

We can now show that $K^{\mathrm{rcf}}$ is the fixed field of (\ref{equation::group}). Trivially, each element of $K^{\mathrm{rcf}}$ is fixed under this group. For the other direction, let $x \in K^{\mathrm{ab}}$ be fixed under (\ref{equation::group}) and let $\overline{K(x)}$ denote the normal closure of $K(x)$. This means that the intersection 
\begin{equation*}
\bigcap_i(\widehat{\mathcal{O}}_i^\times K^\times/K^\times, \overline{K(x)}/K) \subset \Gal(\overline{K(x)}/K)
\end{equation*}
contains only the identity $\mathrm{id}_{\overline{K(x)}}$. As $(\mathcal{O}_i)$ is a descending chain and $\Gal(\overline{K(x)}/K)$ is finite, we already have $(\widehat{\mathcal{O}}_i^\times K^\times/K^\times, \overline{K(x)}/K) = \{ \mathrm{id}_{\overline{K(x)}} \}$ for some order $\mathcal{O}_i$ so that $x \in K[\mathcal{O}_i]$.

From this alternative presentation of $K^{\mathrm{rcf}}$, it follows immediately that $K^{\mathrm{tf}} \subseteq K^{\mathrm{rcf}}$ and
\begin{equation} \label{equation::picard}
\Gal(K^{\mathrm{rcf}}/K^{\mathrm{tf}})=
\frac{(I_{F,f}K^\times/K^\times,K^\mathrm{ab}/K)}{(\widehat{\mathcal{O}}_F^\times K^\times/K^\times,K^\mathrm{ab}/K)}=
F^\times \backslash I_{F,f} / \widehat{\mathcal{O}}_F^\times = \Pic(\mathcal{O}_F);
\end{equation}
here, the injection $C_F \hookrightarrow C_K$ is used in addition to (\ref{equation::artinkernel}).
\end{proof}

A (profinite) group $G$ is called a generalized dihedral group with respect to a normal abelian subgroup $H$ of index $2$ if $G$ is the semidirect product $H \rtimes_{\iota} G/H$ with $\iota: G/H \rightarrow \Aut(H)$ sending the non-trivial element of $G/H$ to the inversion $H \rightarrow H$, $g \mapsto g^{-1}$. The following two lemmas seem to be folklore but we provide proofs here for lack of suitable references.

\begin{lemma} \label{lemma::dihedralgroup} Let $F$ be a totally real field and $K/F$ an imaginary quadratic extension of $F$. Then, $K^{\mathrm{tf}}/F$ is normal and the Galois group $\Gal(K^{\mathrm{tf}}/F)$ is a generalized dihedral group with respect to $\Gal(K^{\mathrm{tf}}/K)$.
\end{lemma}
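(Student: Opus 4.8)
The plan is to realize complex conjugation as the sought order-two element of $\Gal(K^{\mathrm{tf}}/F)$ and to compute its conjugation action on $\Gal(K^{\mathrm{tf}}/K)$ via the $G_\IQ$-equivariance of the reciprocity map.

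First I would record the normality statements. Since $K/F$ is normal, every $\sigma\in\Gal(\IQbar/F)$ fixes $K$ and hence preserves $K^{\mathrm{ab}}$, so $K^{\mathrm{ab}}/F$ is normal. Now $K^{\mathrm{tf}}$ is by definition the fixed field of the subgroup $N:=(I_FK^\times/K^\times,K^{\mathrm{ab}}/K)$ of $\Gal(K^{\mathrm{ab}}/K)$, so the normality of $K^{\mathrm{tf}}/F$ is equivalent to the stability of $N$ under conjugation by $\Gal(\IQbar/F)$. For $\sigma\in\Gal(\IQbar/F)$, the $G_\IQ$-equivariance of the reciprocity map (see Section~\ref{section::classfieldtheory}; cf.\ (\ref{equation::classfieldtheory3}) and \cite[Theorem II.1.11]{Neukirch2013}) gives $\sigma N\sigma^{-1}=(\sigma(I_FK^\times/K^\times),K^{\mathrm{ab}}/K)$, where $\sigma$ acts on $C_K$ through $\sigma|_K\in\Gal(K/F)$; as this action fixes $I_F$ and preserves $K^\times$, it fixes $I_FK^\times/K^\times$ setwise, so $\sigma N\sigma^{-1}=N$. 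Hence $K^{\mathrm{tf}}/F$ is normal.

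Write $G=\Gal(K^{\mathrm{tf}}/F)$ and $H=\Gal(K^{\mathrm{tf}}/K)$; then $H\triangleleft G$ has index $2$ and is abelian, being a quotient of $\Gal(K^{\mathrm{ab}}/K)$. Using the embedding $\IQbar\hookrightarrow\IC$ fixed in the paper, complex conjugation restricts to an element $\bar c\in G$ with $\bar c^2=\mathrm{id}$; since $F$ is totally real and $K$ totally imaginary, $\bar c$ restricts to the nontrivial element of $\Gal(K/F)$, so $\bar c\notin H$. Thus $\langle\bar c\rangle$ is an order-two complement of $H$ and $G=H\rtimes G/H$. For the action of $G/H$ on $H$, let $\pi\colon C_K\twoheadrightarrow\Gal(K^{\mathrm{ab}}/K)\twoheadrightarrow H$ be the universal norm residue symbol followed by restriction to $K^{\mathrm{tf}}$. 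The equivariance above, pushed to the $\bar c$-stable quotient $H$, yields $\bar c\,\pi(a)\,\bar c^{-1}=\pi(\bar a)$ for every $a\in C_K$, where $\bar a$ denotes the $\Gal(K/F)$-conjugate of $a$. A place-by-place computation identifies the product $a\bar a\in C_K$ with the image of $N_{K/F}(a)\in C_F$ under the inclusion $C_F\hookrightarrow C_K$, so $a\bar a\in I_FK^\times/K^\times\subseteq\ker\pi$ and hence $\pi(\bar a)=\pi(a)^{-1}$. Since $\pi$ is surjective, $\bar c h\bar c^{-1}=h^{-1}$ for all $h\in H$, so $G$ is generalized dihedral with respect to $H$.

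I expect the only genuine obstacle to be expository: one must state carefully the conjugation-equivariance of the reciprocity map for $\sigma\in G_\IQ$ fixing $K$ (the stated compatibility (\ref{equation::classfieldtheory3}) concerns $\sigma\in\Gal(L/K)$, so a limiting and transport argument is needed), and spell out the identity $a\bar a=N_{K/F}(a)$ in $C_K$ on each place of $F$. Both are routine but warrant a clean sentence.
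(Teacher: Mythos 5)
Your proof is correct and follows essentially the same route as the paper's: normality via the Galois-equivariance of the reciprocity map together with the $\Gal(K/F)$-stability of $C_F$, and the inversion action via the identity $a\bar a=N_{K/F}(a)\in C_F\subseteq\ker\pi$ (the paper phrases this as $c\,\iota(c)\in C_K^{\Gal(K/F)}=C_F$ for an arbitrary lift $\iota$ of the nontrivial element of $\Gal(K/F)$). Your explicit choice of complex conjugation as the lift is a small refinement: it makes the splitting of $\Gal(K^{\mathrm{tf}}/F)\rightarrow\Gal(K/F)$, and hence the semidirect-product structure required by the definition of a generalized dihedral group, explicit, whereas the paper's proof only verifies the inversion property and leaves the splitting implicit.
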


\begin{proof} Let $\sigma$ be an automorphism of $\IQbar$ fixing $F$. Being a quadratic extension, $K/F$ is trivially normal and $\sigma(K)=K$. By (\ref{equation::classfieldtheory3}), the field $\sigma(K^{\mathrm{tf}})$ is the fixed field of $(\sigma(C_F), K^{\mathrm{ab}}/K)=(C_F, K^{\mathrm{ab}}/K)$ and thus $\sigma(K^{\mathrm{tf}})=K^{\mathrm{tf}}$. This shows that the extension $K^{\mathrm{tf}}/F$ is Galois. In addition, there is an exact sequence
\begin{equation*}
\xymatrix{
1 \ar[r] & \Gal(K^{\mathrm{tf}}/K) \ar[r] & \Gal(K^{\mathrm{tf}}/F) \ar[r]
& \Gal(K/F) \ar[r]
& 1.
}
\end{equation*}
Let $\iota \in \Gal(K^{\mathrm{tf}}/F)$ be a lifting of the non-trivial element of $\Gal(K/F)$. We have to show that $\iota \circ \sigma \circ \iota^{-1} = \sigma^{-1}$ for any $\sigma = (c, K^{\mathrm{tf}}/K)$, $c \in C_K$. By (\ref{equation::classfieldtheory3}),\begin{equation*}
(c,K^{\mathrm{tf}}/K) \circ
\left(
\iota \circ (c,K^{\mathrm{tf}}/K) \circ \iota^{-1} \right)= (c,K^{\mathrm{tf}}/K) \circ (\iota(c),K^{\mathrm{tf}}/K) =
(c \iota(c),K^{\mathrm{tf}}/K). 
\end{equation*}
As $c \iota(c) \in C_K^{\Gal(K/F)}=C_F$ by Hilbert's Satz 90, the automorphism $(c\iota(c),K^{\mathrm{tf}}/K)$ is the identity on $K^{\mathrm{tf}}$. In other words, we have $\sigma \circ (\iota \circ \sigma \circ \iota^{-1}) = \mathrm{id}_{K^{\mathrm{tf}}}$.
\end{proof}

The fact that $\Gal(K^{\mathrm{tf}}/F)$ is a generalized dihedral group has a substantial consequence due to the following elementary lemma.

\begin{lemma} \label{lemma::dihedralgroup2} Let $G$ be a generalized dihedral group with respect to a subgroup $H$ and let $H_0 \subset G$ be a normal subgroup such that the quotient $\pi: G \rightarrow G/H_0$ is abelian. Then, $H/(H \cap H_0)$ is annihilated by $2$.
\end{lemma}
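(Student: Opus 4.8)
The plan is to exploit the defining property of a generalized dihedral group: conjugation by any element outside $H$ inverts $H$. Write $G = H \rtimes_\iota G/H$ as in the definition and fix any element $\tau \in G \setminus H$, i.e.\ a lift to $G$ of the non-trivial element of $G/H$. By the hypothesis on $\iota$, conjugation by $\tau$ acts on $H$ as inversion, so $\tau h \tau^{-1} = h^{-1}$ for every $h \in H$. Note we do \emph{not} require $\tau$ itself to be an involution, so the argument should be phrased through commutators rather than through an identity of the shape ``$\tau h \tau = h^{-1}$''.

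Next, since the quotient map $\pi \colon G \to G/H_0$ has abelian target, every commutator in $G$ maps to the identity, so the (closed) commutator subgroup $[G,G]$ is contained in $H_0$. Now take an arbitrary $h \in H$ and compute the commutator
\[
[\tau, h] = \tau h \tau^{-1} h^{-1} = h^{-1} h^{-1} = h^{-2}.
\]
This element lies in $[G,G] \subseteq H_0$, and it obviously lies in $H$ as well, so $h^{-2} \in H \cap H_0$; equivalently $h^{2} \in H \cap H_0$ for every $h \in H$. Hence every element of the quotient $H/(H \cap H_0)$ squares to the identity, which is precisely the statement that $2$ annihilates $H/(H \cap H_0)$.

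There is essentially no obstacle to this argument; it is a three-line computation once one has unwound the definition of a generalized dihedral group. The only items worth a sentence are the remark above that $\tau$ need not be chosen of order $2$, and—in the profinite setting—the observation that the conclusion passes to topological groups without difficulty, since $h \mapsto h^{2}$ is continuous, $H \cap H_0$ is closed, and one may take $[G,G]$ to mean its topological closure (the containment $[G,G] \subseteq H_0$ still holds because $H_0$ is closed).
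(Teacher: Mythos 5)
Your argument is correct and is essentially the paper's own proof in a slightly different notation: the paper picks $g_0 \in G \setminus H$, uses $g_0 h g_0^{-1} = h^{-1}$, and concludes $\pi(h) = \pi(h)^{-1}$ in the abelian quotient $G/H_0$, which is exactly your commutator computation $[\tau,h] = h^{-2} \in H_0$. The extra remarks about $\tau$ not needing to be an involution and about the topological setting are fine but not needed beyond what the paper already does.
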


\begin{proof} As $H/(H \cap H_0) \approx H H_0 / H_0$, the assertion means that $\pi(h)^2=1$ for any $h \in H$. Choose some $g_0 \in G \setminus H$. Then, $g_0 h g_0^{-1} = h^{-1}$ and hence $\pi(h)=\pi(h)^{-1}$ as $G/H_0$ is abelian. This already completes the proof.
\end{proof}

With the above lemmas at our disposal, we can deduce the announced corollary.

\begin{proof}[Proof of Corollary \ref{corollary::ringclassfields}] By Theorem \ref{theorem::rayclassfields}, $\mathrm{Gal}(K_r^\mathrm{ab} \cap \left( K_r \prod_{i=1}^{r-1}K_i^{\mathrm{ab}} \right) / K_r F^\mathrm{ab})$ and hence $\mathrm{Gal}(K_r^\mathrm{tf} \cap L / K_r^\mathrm{tf} \cap K_r F^\mathrm{ab} \cap L)$ is annihilated by 
\begin{equation*}
n= e(\Gal(K F^{\mathrm{ab}} /K_rF^{\mathrm{ab}})) \cdot \lcm_{i \neq j}\{[K:K_i][K_j:F]\}.
\end{equation*}
Since $K$ is a composite of quadratic extensions of $F$ the group $\Gal(K/F)$ is abelian of exponent $2$. Consequently, we have $K \subset F^{\mathrm{ab}}$ and hence $KF^{\mathrm{ab}} = F^\mathrm{ab}= K_rF^\mathrm{ab}$. We infer that $n=2^r$. From Lemma \ref{lemma::dihedralgroup} we know that $\Gal(K_r^\mathrm{tf}/F)$ is a generalized dihedral group with respect to its subgroup $\Gal(K_r^\mathrm{tf}/K_r)$. We may apply now Lemma \ref{lemma::dihedralgroup2} with $H_0= \Gal(K_r^{\mathrm{tf}}/K_r^\mathrm{tf} \cap F^\mathrm{ab} \cap L)$, obtaining that $\Gal(K_r^{\mathrm{tf}}\cap F^{\mathrm{ab}} \cap L/K_r)$ is annihilated by $2$. The assertion follows straightforwardly from the exact sequence
\begin{equation*}
\xymatrix{0 \ar[r] & \mathrm{Gal}(K_r^\mathrm{tf} \cap L / K_r^\mathrm{tf} \cap F^\mathrm{ab} \cap L) \ar[r] & \Gal(K_r^{\mathrm{tf}} \cap L/K_r) \ar[r] & \Gal(K_r^{\mathrm{tf}}\cap F^{\mathrm{ab}} \cap L/K_r) \ar[r] & 0.}
\end{equation*}
\end{proof}

\section{Proof of Theorem {\ref{theorem::heegner}}} \label{section::proof1}

Our proof improves upon the original one given by Rosen and Silverman \cite{Rosen2007}. We restate details from there as far as they are necessary to see that our Corollary \ref{corollary::ringclassfields} eliminates the restrictions of their proof effectively.

Assume given CM-points $P_1,\dots,P_r$ on $Y_0(N)$ associated with distinct CM-fields $K_i$ such that
\begin{equation} \label{equation::linearindependencerelation}
n_1 \cdot \pi(P_1) + n_2 \cdot \pi(P_2) + \cdots + n_r \cdot \pi(P_r) = 0, (n_1,\dots,n_r) \in \IZ^r \setminus \{ 0 \} \text{,}
\end{equation}
with respect to the group law on $E$. By using \cite[Theorem 1.5]{Nekovavr1999}, whose straightforward argument is completely effective (see the remarks in \cite[Section 3]{Nekovavr1999}) we may assume that at least two coefficients $n_i$ are non-zero. Assume hence that $n_r \neq 0$ and $K_r \neq K_\ast$ where $K_\ast$ is the (possibly non-existent) CM-field from Section \ref{subsection::classnumbers}. As explained in Section \ref{subsection::cmpoints}, an imaginary quadratic order $\mathcal{O}_i \subseteq \mathcal{O}_{K_i}$ is associated with each CM-point $P_i$. Furthermore, we assign a ring class field $K[\mathcal{O}_i]$ with each order $\mathcal{O}_i$ (see Section \ref{subsection::ringclassfields}).

Our next aim is to bound $\left|\disc(\mathcal{O}_r)\right|$. We do this by deriving two competing bounds on the size $\# \Pic(\mathcal{O}_r)$ of the Picard group $\Pic(\mathcal{O}_r)$. By (\ref{equation::picard2}), $\Pic(\mathcal{O}_r)$ is isomorphic to $\Gal(K_r[\mathcal{O}_r]/K_r)$. We consider the following diagram of abelian field extensions:
\begin{equation*}
\xymatrix{ & K_r(P_r) = K_r[\mathcal{O}_r] \ar@{-}[ld] \ar@{-}[ddd]\\ K_r(\pi(P_r)) \ar@{-}[d] & \\ K_r(n_r \cdot \pi(P_r)) \ar@{-}[rd] & \\ & K_r}
\end{equation*}
As both the group law on $E$ and the modular parameterization $\pi: X_0(N) \rightarrow E$ are defined over $\IQ$, it is a direct consequence of (\ref{equation::linearindependencerelation}) that
\begin{equation*}
K_r(n_r \cdot \pi(P_r)) \subseteq K_r[\mathcal{O}_r] \cap L \text{ where } L= K_r \prod_{i<r}K_i[\mathcal{O}_i]\text{.}
\end{equation*}
By our Corollary \ref{corollary::ringclassfields}, this implies that $\Gal(K_r(n_r \cdot \pi(P_r))/K_r)$ is annihilated by $2^{r+1}$. The degree $[K_r(\pi(P_r)):K_r(n_r \cdot \pi(P_r))]$ is bounded by some effectively computable constant $c_5(E)$; the proof is exactly the same as that of \cite[Theorem 11]{Rosen2007} noting that Serre's open image theorem is effective by \cite{Lombardo2015}. The degree $[K_r(P_r):K_r(\pi(P_r))]$ is less than $c_6(\pi)=\deg(\pi)$ due to an elementary argument (see \cite[Proposition 6]{Rosen2007}). The group $G=\Gal(K_r[\mathcal{O}_r]/K_r)$ is abelian and hence a direct product of cyclic groups $(\IZ/p^n\IZ)$ ($p$ a prime, $n$ a positive integer) by the elementary divisor theorem. Here, a factor of the form $(\IZ/2^n\IZ)$ can only occur if $$n\leq c_7(E,\pi,r) = (r+1) + \lfloor \log_2(c_5(E) c_6(\pi)) \rfloor.$$ Genus theory (cf.\ \cite[Proposition 6.3]{Zhang2005}) yields the bound
\begin{equation*}
\dim_{\IF_2} (\Pic(\mathcal{O}_r)[2]) \leq c_8(\varepsilon_1) \left|\disc(\mathcal{O}_r)\right|^{\varepsilon_1}
\end{equation*}
for some effective constant $c_8(\varepsilon_1)>0$. As $\# \Pic(\mathcal{O}_r)[2]$ is also the number of factors $(\IZ/2^n\IZ)$ occurring in $G$, it follows that
\begin{equation*}
\# \Pic(\mathcal{O}_r)^{\mathrm{even}} \leq \left(c_8(\varepsilon_1) \left|\disc(\mathcal{O}_r)\right|^{\varepsilon_1}\right)^{c_7(E,\pi,r)}\text{.}
\end{equation*}
Furthermore, we infer
\begin{equation*}
\# \Pic(\mathcal{O}_r)^{\mathrm{odd}} = [K_r[\mathcal{O}_r]:K_r]^{\mathrm{odd}} \leq [K_r[\mathcal{O}_r]:K_r(n_r \cdot \pi(P_r))] \leq c_5(E)c_6(\pi)
\end{equation*}
from the fact that $[K_r(n_r \cdot \pi(P_r)):K_r]$ is a power of $2$. By assumption, we have $K_r \neq K_\ast$ so that we may use (\ref{equation::classnumberbound}) with $\mathcal{O}=\mathcal{O}_r$. Combining this with the above two inequalities, we obtain
\begin{equation*}
c_4(\varepsilon_2) \left| \disc(\mathcal{O}_r)\right|^{1/2-\varepsilon_2} < \# \Pic(\mathcal{O}_r) \leq c_5(E)c_6(\pi)c_8(\varepsilon_1)^{c_7(E,\pi,r)} \left| \disc(\mathcal{O}_r)\right|^{\varepsilon_1c_7(E,\pi,r)}\text{.}
\end{equation*}
Choosing both $\epsilon_1$ and $\epsilon_2$ sufficiently small, we derive an upper bound on $\left|\disc(\mathcal{O}_r)\right|$.

\section{Proof of Theorem {\ref{theorem::andreoort}}}

Let $\pr_i: Y(1)^n \rightarrow Y(1)^{n-1}$ denote the projection omitting the $i$-th factor of $Y(1)^n$. We claim that the generic fiber dimension of $\pr_i|_X$ is zero; otherwise upper semicontinuity of the fiber dimension (\cite[Th\'eor\`eme 13.1.3]{EGA4III}) would imply that each fiber has dimension $\geq 1$. This is only possible if $\pi(X)=Y(1) \times \pr_i(X)$ for an appropriate coordinate permutation $\pi: Y(1)^n \rightarrow Y(1)^n$, which contradicts our assumptions. It follows that the closed points $P \in X$ such that $\dim(\pr_i^{-1}(\pr_i(P)) \cap X) \geq 1$ are all contained in a proper Zariski closed subset $Z_i \subsetneq X$. Working with explicit equations, we ascertain that the height and the degree of $Z^\prime = Z_1 \cup \cdots \cup Z_n$ can be effectively bounded in terms of the degree and height of $X$.

The closed set $Z^\prime$ constitutes a part of the exceptional set $Z$ whose existence is claimed in the theorem. To describe the other part, let $P=(P_1,\cdots,P_n) \in (X \setminus Z^\prime)(\IQbar)$ be a special point such that $K_i(P) \neq K_j(P)$ for all $i \neq j$. There exists some $i_0 \in \{ 1,\dots, n\}$ such that $K_{i_0}(P) \neq K_\ast$. Furthermore, denote by $\mathcal{O}_i$ the quadratic order associated with the CM-point $P_i$ as in Section \ref{subsection::cmpoints}. Recall that $K_i(P_i)=K_i[\mathcal{O}_i]$. There exists some effectively computable constant $c_9(X)$, depending only on the degree of $X$ and the degree of its field of definition, such that 
\begin{equation*}
[K_{i_0}[\mathcal{O}_{i_0}] \cdot L : L]\leq c_9(X) \text{ where } L = K_{i_0} \prod_{\substack{1\leq i \leq n \\ i \neq {i_0}}} K_i[\mathcal{O}_i].
\end{equation*}
Consider the following diagram of abelian field extensions:
\begin{equation*}
\xymatrix{ 
& K_{i_0}[\mathcal{O}_{i_0}] \cdot L \ar@{-}[ld] \ar@{-}[rd] & \\
K_{i_0}[\mathcal{O}_{i_0}] \ar@{-}[rd] & & L  \ar@{-}[ld] \\
& K_{i_0}[\mathcal{O}_{i_0}] \ar@{-}[d] \cap L \\
& K_{i_0} &
}
\end{equation*}
By \cite[Theorem VI.1.12]{Lang2002}, we have an equality $[K_{i_0}[\mathcal{O}_{i_0}] \cdot L:L] = [K_{i_0}[\mathcal{O}_{i_0}] : K_{i_0}[\mathcal{O}_{i_0}] \cap L]$. Furthermore, Corollary \ref{corollary::ringclassfields} implies that $\Gal(K_{i_0}[\mathcal{O}_{i_0}] \cap L/K_{i_0})$ is annihilated by $2^{n+1}$.

Analogous to the proof of Theorem \ref{theorem::heegner}, we can derive an upper bound for $\left| \disc (\mathcal{O}_{i_0}) \right|$ from this information. As there, genus theory yields
\begin{equation*}
\# \Pic(\mathcal{O}_{i_0})^{\mathrm{even}} \leq (\dim_{\IF_2}(\Pic(\mathcal{O}_{i_0})[2]))^{c_{10}(X)} \leq (c_8(\varepsilon_1)|\disc(\mathcal{O}_{i_0})|^{\varepsilon_1})^{c_{10}(X)}
\end{equation*}
with $c_{10}(X)=n+1 + \lfloor \log_2(c_9(X)) \rfloor$.
Furthermore, we have $\# \Pic(\mathcal{O}_{i_0})^{\mathrm{odd}} \leq c_9(X)$. Using the Siegel-Tatuzawa Theorem (\ref{equation::classnumberbound}), we obtain
\begin{equation*}
c_4(\varepsilon_2) \left| \disc(\mathcal{O}_{i_0})\right|^{1/2-\varepsilon_2} < \# \Pic(\mathcal{O}_{i_0}) \leq  c_8(\varepsilon_1)^{c_{10}(X)} c_9(X) \left| \disc(\mathcal{O}_{i_0})\right|^{\varepsilon_1c_{10}(X)}\text{.}
\end{equation*}
For $\varepsilon_1, \varepsilon_2 \rightarrow 0$, this leads to an effective upper bound $c_{11}(X)$ on $\left| \disc(\mathcal{O}_{i_0})\right|$. There exist only finitely many CM-points $Q_1,\dots,Q_m \in Y(1)$ whose associated imaginary quadratic order has discriminant less than $c_{11}(X)$. By assumption, the ${i}$-th coordinate function $x_{i}: Y(1)^n \rightarrow Y(1)$, $i \in \{ 1, \dots, n \}$, restricts to a non-constant regular function on $X$. This implies that 
\begin{equation*}
Z^{\prime \prime} = X \cap \left( \bigcup_{1 \leq i \leq n}\bigcup_{1 \leq j \leq m}( x_{i}^{-1}(Q_j)) \right)
\end{equation*}
is a proper algebraic subset of $X$. The above argument shows that the CM-points $P$ contradicting the assertion of the theorem with $Z = Z^\prime$ must be contained in $Z^{\prime \prime}$. Finally, the height and degree of $Z^{\prime \prime}$ can evidently be effectively bounded so that the assertion of the theorem is true for $Z = Z^\prime \cup Z^{\prime \prime}$.

\textbf{Acknowledgements:} The author thanks Yuri Bilu, Jonathan Pila, Harry Schmidt, and Gisbert W\"ustholz for their helpful remarks and for the discussions he had with them. In addition, he thanks both the Max Planck Institute for Mathematics and the Fields Institute for their supportive hospitality during the composition of this article. Finally, he acknowledges financial support by an Ambizione Grant of the Swiss National Science Foundation.

\bibliographystyle{plain}
\bibliography{../Bibliography/references}

\end{document}